\newcommand{\R}{{\mathbb R}}
\newcommand{\C}{{\mathbb C}} 
\newcommand{\Z}{{\mathbb Z}}
\newcommand{\ep}{\varepsilon}
\newtheorem{theorem}{Theorem}
\newtheorem{corollary}[theorem]{Corollary}
\newtheorem{proposition}[theorem]{Proposition}
\newtheorem{lemma}[theorem]{Lemma}
\theoremstyle{definition}
\newtheorem{definition}{Definition}
\newtheorem{remark}{Remark}
\begin{document}

\title[Zero set of monochromatic random waves]{Topology and nesting of the zero set components  of  monochromatic random waves}

\begin{abstract}
This paper is dedicated to the study of the topologies and nesting configurations of the components of the zero set of monochromatic random waves. We prove that the probability of observing any diffeomorphism type, and any nesting arrangement, among the zero set components is strictly positive for waves of large enough frequencies. Our results are a consequence of building Laplace eigenfunctions in Euclidean space whose zero sets have a component with prescribed topological type, or an arrangement of components with prescribed nesting configuration.
\end{abstract}

\author[Y. Canzani]{Yaiza Canzani}
\author[P. Sarnak]{Peter Sarnak}

\address[Y. Canzani]{ University of North Carolina at Chapel Hill.\medskip}
 \email{canzani@email.unc.edu}
\address[P. Sarnak]{ Institute for Advanced Study and  Princeton University.\medskip}
\email{sarnak@math.ias.edu}


\maketitle

\section{Introduction}

For $n \geq 1$ let $E_1(\R^n)$ denote the linear space of entire (real valued) eigenfunctions $f$ of the Laplacian $\Delta$ whose eigenvalue is $1$
\begin{equation}\label{E:0}
\Delta f + f=0.
\end{equation}
The zero set  of $f$ is the set 
\[V(f)=\{x \in \R^n:\; f(x)=0\}.\]
The zero set decomposes into a collection of connected components which we denote by $\mathcal C(f)$. Our interest is in the topology of $V(f)$ and of the members of $\mathcal C(f)$. Let $H(n-1)$ denote the (countable and discrete) set of diffeomorphism classes of compact connected smooth $(n-1)$-dimensional manifolds that can be embedded in $\R^n$. The compact components $c$ in $\mathcal C(f)$ give rise to elements $t(c)$ in $H(n-1)$ (here we are assuming that $f$ is generic with respect to a Gaussian measure so that $V(f)$ is smooth, see Section \ref{monochromatic}). The connected components of $\R^n \backslash V(f)$ are the nodal domains of $f$ and our interest is in their nesting properties, again for generic $f$. To each compact $c \in \mathcal C(f)$ we associate a finite connected rooted tree as follows. By the Jordan-Brouwer  separation Theorem \cite{Li} each component  $c \in \mathcal C(f)$ has an exterior and interior. We choose the interior to be the compact end. The nodal domains of $f$, which are in the interior of $c$, are taken to be the vertices of a graph. Two vertices share an edge if the respective nodal domains have a common boundary component (unique if there is one). This gives a finite connected rooted tree  denoted $e(c)$; the root being the domain adjacent to $c$ (see Figure 2). Let $\mathcal T$ be the collection (countable and discrete) of finite connected rooted trees. Our main results are that any topological type and any rooted tree can be realized by elements of $E_1(\R^n)$.


\begin{theorem}\label{main theorem R a}
 Given $t \in H(n-1)$ there exists $f \in E_1(\R^n)$ and $c\in \mathcal C(f)$ for which $t(c)=t$.
\end{theorem}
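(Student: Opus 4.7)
The plan is to build a Helmholtz solution on a bounded region of $\R^n$ having $\Sigma$ as a transverse nodal component, approximate it by an entire eigenfunction via a Runge-Lax-Malgrange theorem, and then use structural stability to transfer the topology of $\Sigma$ to a nodal component of the entire eigenfunction.

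First I would pick a compact connected real-analytic embedded hypersurface $\Sigma \subset \R^n$ of type $t$; any smooth representative furnished by $H(n-1)$ can be real-analytically approximated without changing its isotopy type. Let $\Omega$ be the bounded component of $\R^n \setminus \Sigma$. Dilating $\Sigma$, I may arrange that the first Dirichlet eigenvalue of $-\Delta$ on $\Omega$ equals $1$, and denote by $u_1>0$ the corresponding ground state, so that $\Delta u_1+u_1=0$ in $\Omega$ with $u_1|_{\Sigma}=0$. By the Hopf boundary lemma $\nabla u_1$ is non-vanishing on $\Sigma$, and by the Morrey boundary-analyticity theorem $u_1$ is real-analytic up to $\Sigma$. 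Cauchy-Kovalevskaya (equivalently, unique analytic continuation) extends $u_1$ to a real-analytic Helmholtz solution in some open tubular neighborhood $T$ of $\Sigma$; after shrinking $T$, the gradient condition forces $V(u_1)\cap T=\Sigma$. Gluing on $W:=T\cup\Omega$ produces a real-analytic $u$ with $\Delta u+u=0$ on $W$ and $V(u)\cap W=\Sigma$.

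Next, I would choose $T_0\Subset T$ and set $K:=\overline{T_0\cup\Omega}$. Then $K$ is compact, $K\subset W$, and $\R^n\setminus K$ is a single unbounded connected region, since $K$ is obtained from $\overline{\Omega}$ by attaching a thin outward collar. Because $\Delta+1$ is an elliptic operator with constant coefficients, the Lax-Malgrange approximation theorem produces $f\in E_1(\R^n)$ with $\|f-u\|_{C^1(K)}$ as small as desired. The transverse vanishing of $u$ on $\Sigma$, together with $u_1>0$ on $\Omega$, yields $|u|\geq c>0$ on $K\setminus T_0$, so for $\|f-u\|_{C^1(K)}$ small enough one has $V(f)\cap K\subset T_0$. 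The implicit function theorem, applied in normal coordinates to $\Sigma$, identifies $V(f)\cap T_0$ with a smooth compact hypersurface $c$ realized as a small normal graph over $\Sigma$, and hence diffeomorphic to it. Since $c\subset\mathrm{int}(K)$ while the remainder of $V(f)$ lies in $\R^n\setminus K$, the set $c$ is an entire connected component of $V(f)$, giving an element of $\mathcal C(f)$ with $t(c)=t$.

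The main obstacle is the Runge-Lax-Malgrange step, which requires $\R^n\setminus K$ to have no bounded components. A plain tubular neighborhood of $\Sigma$ has the bounded complement component $\Omega$, blocking the approximation. Enlarging the region of local definition of the Helmholtz solution to include all of $\Omega$ is precisely what the Dirichlet ground state $u_1$ provides once $\Sigma$ is scaled so that $\lambda_1(\Omega)=1$; this is the step where the real-analyticity of $\Sigma$, the Hopf lemma, and Morrey's boundary-analyticity theorem are all needed simultaneously.
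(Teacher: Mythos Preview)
Your proposal is correct and follows essentially the same route as the paper: pass to a real-analytic representative, take the first Dirichlet eigenfunction on the enclosed domain (rescaled so the eigenvalue is $1$), extend it analytically across the boundary, apply Lax--Malgrange to globalize, and use $C^1$-stability of the transverse zero set. The only cosmetic differences are that the paper rescales the eigenfunction after the fact rather than the domain beforehand, and invokes Thom's isotopy theorem where you use the implicit function theorem in a normal tube; you are also slightly more explicit than the paper in verifying that the resulting hypersurface is an entire connected component of $V(f)$.
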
	
\begin{theorem}\label{main theorem R b}
Given $T \in \mathcal T$ there exists $f \in E_1(\R^n)$ and $c \in \mathcal C(f)$ for which $e(c)=T$.
\end{theorem}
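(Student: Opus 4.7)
The plan is to argue by strong induction on the number of vertices of $T$.

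\emph{Base case.} If $T$ is the one-vertex tree, take $f = \psi_0$, the radial eigenfunction $\psi_0(x) = |x|^{-(n-2)/2} J_{(n-2)/2}(|x|)$ on $\R^n$. Its innermost zero sphere $c = \{|x| = j_{(n-2)/2,1}\}$ is a compact component of $V(f)$ enclosing a single nodal domain, so $e(c)$ is the one-vertex tree.

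\emph{Inductive step.} Write $T$ as a root $r$ joined to subtrees $T_1, \ldots, T_k$ with $k \ge 1$. By induction there exist $f_j \in E_1(\R^n)$ with compact components $c_j \in \mathcal C(f_j)$ satisfying $e(c_j) = T_j$, which by Theorem~\ref{main theorem R a} may be taken to be round spheres; let $B_j$ be a closed ball containing $c_j$ and the full realization of $T_j$ in its interior. The goal is to combine the $f_j$'s into a single $F \in E_1(\R^n)$ whose zero set has (i) a compact outer component $c$, (ii) inside $c$, compact components $c'_j$ each realizing $T_j$, and (iii) no other zeros in the region between $c$ and the $c'_j$'s --- so that this region is one nodal domain, the root of $T$.

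The construction places widely-separated translates $\tau_{x_j} f_j$ of the $f_j$'s inside a decaying eigenfunction $G \in E_1(\R^n)$ whose zero set has a large compact component $c_G$ enclosing them, and sets
\begin{equation*}
F(x) = G(x) + \sum_{j=1}^{k} \alpha_j\, f_j(x - x_j) \in E_1(\R^n),
\end{equation*}
with small weights $\alpha_j > 0$. Using the pointwise decay $O(|x|^{-(n-1)/2})$ of Herglotz-wave-type eigenfunctions (which we inductively arrange the $f_j$ to be), we can ensure that $F$ is $C^1$-close to $\alpha_j f_j(\,\cdot\, - x_j)$ on each $B_j + x_j$ (since the other contributions are small far from $x_j$) and $C^1$-close to $G$ near $c_G$. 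Structural stability of transverse zero sets (Thom's isotopy theorem) then produces the prescribed nested topology on these regions.

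The \emph{main obstacle} is controlling $V(F)$ in the annular region between $c_G$ and the $B_j+x_j$'s, where both $G$ and the $f_j$-contributions are comparable and may conspire to create spurious compact zero components; one must choose the $\alpha_j$, separations, and scale of $c_G$ so that $F$ stays nonvanishing in this intermediate region. If this direct scale analysis proves too delicate --- which it may, since having the interior of $c_G$ be a single nodal domain of $G$ constrains $G$ considerably (e.g., the interior domain must have first Dirichlet eigenvalue $1$) --- one instead constructs a local eigenfunction $h$ on a neighborhood of a large ball $K$ with the exact prescribed nested zero set, and then produces $F \in E_1(\R^n)$ with $\|F - h\|_{C^1(K)}$ small via a Runge-type density theorem for $\Delta + 1$ (local eigenfunctions on a ball are $C^\infty$-approximable by entire ones through truncation of the spherical-harmonic expansion); structural stability then completes the argument.
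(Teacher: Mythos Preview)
Your proposal has a genuine gap: neither route can be completed as written. In the direct construction, once the interior $\Omega$ of $c_G$ is a single nodal domain of $G\in E_1(\R^n)$ one has $\lambda_1(\Omega)=1$, and domain monotonicity then bounds the inradius of $\Omega$ by the fixed constant $\rho_0:=j_{n/2-1,1}$. But each of the $|T_j|$ nodal domains enclosed by $c_j$ also has first Dirichlet eigenvalue $1$, so by Faber--Krahn the ball $B_j$ has radius at least $|T_j|^{1/n}\rho_0$ and cannot sit inside $\Omega$ once $|T_j|\ge 2$. The same inradius bound places every center $x_j$ within distance $\rho_0$ of $c_G$, so $f_j(\cdot-x_j)$ and its gradient are of unit size somewhere on $c_G$; your requirement that $\sum_j\alpha_j f_j(\cdot-x_j)$ be $C^1$-small near $c_G$ forces all $\alpha_j$ small, while the requirement that $G$ be negligible on $B_j+x_j$ forces $\alpha_j$ large relative to $\|G\|_\infty$. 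There is simply no separation of scales available when every nodal domain of every $E_1$ function is pinned to eigenvalue $1$; the Herglotz decay you invoke is beside the point. (Your appeal to Theorem~\ref{main theorem R a} to make the $c_j$ round spheres is also misplaced --- that result controls diffeomorphism type, not metric shape together with a prescribed tree.)

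The fallback is circular. You propose to build a \emph{local} eigenfunction $h$ on a ball with the exact prescribed nesting and then globalize by Runge approximation; but by the remark following Proposition~\ref{E: approximation}, elements of $E_1(B)$ on a ball are precisely the uniform-on-compacta limits of elements of $E_1(\R^n)$, so the local and global problems are equivalent, and you have supplied no mechanism to produce $h$. The paper's proof is entirely different and does not proceed by induction on $T$. It perturbs the fixed singular model $u_0=\prod_i\sin(\pi x_i)$, encodes $T$ combinatorially via ``engulf'' and ``join'' operations on unions of grid cubes, prescribes a $C^1$ function $h$ on the measure-zero $(n-2)$-skeleton $K$ of the grid whose sign pattern dictates which adjacent same-sign cubes merge or separate, invokes a Mergelyan-type interpolation (Theorem~\ref{T:perturbation}) to find $f\in E_1(\R^n)$ matching $h$ in $C^1$ on $K$, and then analyzes how $u_\varepsilon=u_0+\varepsilon f$ resolves the edge and vertex singularities of $u_0^{-1}(0)$. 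The complexity of $T$ is absorbed into combinatorics at unit scale rather than into spatial separation, which the eigenvalue rigidity forbids.
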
	

Theorems \ref{main theorem R a} and \ref{main theorem R b}  are of basic interest in the understanding of  the possible shapes of nodal sets and domains of eigenfunctions in $\R^n$ (it applies equally well to any eigenfunction with eigenvalue $\lambda^2>0$ instead of $1$). Our main purpose however is to apply it to derive a basic property of the universal monochromatic measures $\mu_C$ and $\mu_X$ whose existence was proved  in  \cite{SW}.  We proceed to introduce these measures.

Let $(S^n,g)$ be the $n-$sphere endowed with a  smooth, Riemannian metric $g$. Our results apply equally well with $S^n$ replaced by any compact smooth manifold $M$; we restrict to $S^n$ as it allows for a very clean formulation. Consider an orthonormal basis $\{\varphi_j\}_{j=1}^\infty$ for $L^2(S^n,g)$ consisting of real-valued eigenfunctions, 
 $ \Delta_g \phi_j = -\lambda_j^2 \phi_j$.
A monochromatic random wave on $(S^n,g)$ is the Gaussian random  field $f=f_{\eta, \lambda}$
\begin{equation}
f:={D_{\eta, \lambda}^{-1/2}}\sum_{\lambda_j\in [\lambda,  \lambda+\eta_\lambda]} a_j \phi_j,\label{E:RWdef}
\end{equation}
where the $a_j$'s are real valued i.i.d standard Gaussians,  $a_j\sim N(0,1)_{\R}$,  $\eta_\lambda=\eta(\lambda)$ is a non-negative function satisfying $\eta(\lambda)=o(\lambda)$ as $\lambda \to \infty$, and $D_{\eta, \lambda}=\#\{j:\, \lambda_j \in [\lambda, \lambda+\eta]\}$.
 When choosing $\eta \equiv 0$ the $\lambda$'s we consider in forming the $f_{\eta, \lambda}$'s  are the square roots of the Laplace eigenvalues. 
To a monochromatic random wave we associate its (compact) nodal set $V(f)$ and a corresponding finite set of nodal domains. The connected components of $V(f)$ are denoted by $\mathcal C(f)$ and each $c \in \mathcal C(f)$ yields a $t(c) \in H(n-1)$. Each $c\in \mathcal C(f)$ also gives a tree end $e(c)$ in $T$ which is chosen to be the smaller of the two rooted trees determined by the inside and outside of $c \subset S^n$. The topology of $V(f)$ is described completely by the probability measure $\mu_{\mathcal C(f)}$ on $H(n-1)$ given by 
\[\mu_{C\mathcal (f)}:= \frac{1}{|\mathcal C(f)|} \sum_{c \in \mathcal C(f)} \delta_{t(c)},\]
where $\delta_t$ is a point mass at $t\in H(n-1)$.
Similarly, the distribution of nested ends of nodal domains of $f$ is described by the measure $\mu_{X(f)}$ on $T$ given by
\[\mu_{X(f)}:= \frac{1}{|\mathcal C(f)|} \sum_{c \in \mathcal C(f)} \delta_{e(c)},\]
with $\delta_{e}$ is the point mass at $e \in \mathcal T$.

 The main theorem in \cite{SW} asserts that there exist probability measures $\mu_{\mathcal C}$ and $\mu_X$ on $H(n-1)$ and $\mathcal T$ respectively to which $\mu_{\mathcal C(f)}$ and $\mu_{X(f)}$  approach as  $\lambda \to \infty$, for almost all $f=f_{\eta, \lambda}$, 
 provided one has that for every $x_0 \in S^n$
\begin{equation}\label{E:CovConv1}
\sup_{u, v \in B(0,{r_\lambda})}  \left | \partial_u^k\partial_v^j \,\left[ \text{Cov}(f_{\eta,\lambda}^{x_0}(u),f_{\eta,\lambda}^{x_0}(v))- \text{Cov}(f_\infty^{x_0}(u), f_\infty^{x_0}(v)) \right]\big. \right|=o(1),
\end{equation}
as $\lambda\to \infty$. 
Here, $r_\lambda=o(\lambda)$,  $f_{\eta, \lambda}^{x_0}:T_{x_0}S^n \to \R$ is the localized wave on $T_{x_0}S^n$ defined as  $f_{\eta, \lambda}^{x_0}(u)= f_{\eta, \lambda}\left(\exp_{x_0}(\tfrac{u}{\lambda}) \right)$, and  $f_\infty^{x_0}$ is the Gaussian random field on $T_{x_0}S^n$ characterized by the covariance kernel  $\text{Cov}(f_\infty^{x_0}(u), f_\infty^{x_0}(v) )=\int_{S_{x_0}S^n} e^{i\langle{u-v}, {w}\rangle_{g_{x_0}}}dw$ (see Section \ref{monochromatic}).

 The  probability measures  $\mu_{\mathcal C}$ and $\mu_X$ are universal in that they only depend on the dimension $n$ of $M$.
 
Monochromatic random waves on the $n$-sphere equipped with the round metric are known as random spherical harmonics whenever $\eta\equiv 0$. It is a consequence of the Mehler-Heine \cite{Meh} asymptotics that  they satisfy condition \eqref{E:CovConv1} for all $x_0 \in S^n$.  Also, on any $(S^n,g)$ the fields $f_{\eta, \lambda}$ with $\eta \to \infty$ satisfy condition \eqref{E:CovConv1} for all $x_0 \in S^n$. Finally, monochromatic random waves  $f_{\eta, \lambda}$  on $(S^n,g)$ with $\eta \equiv c$, for some  $c>0$, satisfy condition  \eqref{E:CovConv1} for every $x_0 \in S^n$ satisfying that the set of geodesic loops that close at $x_0$ has measure $0$ (see \cite{CH}).
 On general manifolds one can define monochromatic random waves just as in $(S^n,g)$.
 Monochromatic random waves with $\eta\equiv 0$ on the $n$-torus are known as arithmetic random waves. They satisfy condition \eqref{E:CovConv1} for all $x_0 \in \mathbb T^n$ if $n\geq 5$, and on $\mathbb T^n$ with $2\leq n \leq 4$ provided we work with a density one subsequence of $\lambda$'s \cite{EEH}.
 On general $(M,g)$ monochromatic random waves with $\eta \equiv c$, for some  $c>0$, satisfy condition  \eqref{E:CovConv1} for every $x_0 \in M$ satisfying that the set of geodesic loops that close at $x_0$ has measure $0$ (see \cite{CH}).
 Examples of such manifolds are surfaces without conjugate points, or manifolds whose sectional curvature is negative everywhere.

Our main application of Theorems \ref{main theorem R a} and  \ref{main theorem R b} is the following result.

\begin{theorem}\label{main theorem}Let $(S^n,g)$ be the $n$-sphere equipped with a smooth Riemannian metric. Let $\mu_{\mathcal C}$ and  $\mu_X$ be the limit  measures (introduced in \cite{SW}) arising from monochromatic random waves on $(S^n,g)$ for which  condition \eqref{E:CovConv1} is satisfied for every $x_0 \in S^n$. 
\begin{enumerate}
\item [(i)] 
The support of $\mu_{\mathcal C}$ is  $H(n-1)$. That is, every atom of $H(n-1)$ is positively charged by $\mu_{\mathcal C}$.\\
\item [(ii)] The support of $\mu_X$ is all of $\mathcal T$. That is, every atom of $\mathcal T$ is positively charged by $\mu_X$.
\end{enumerate}
\end{theorem}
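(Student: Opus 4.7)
The plan is to combine Theorems \ref{main theorem R a}--\ref{main theorem R b} with the scaling-limit description of $\mu_{\mathcal C}$ and $\mu_X$ from \cite{SW}. Under the covariance convergence \eqref{E:CovConv1}, the universal measures can be read off from the Euclidean limit wave $f_\infty^{x_0}$ on $T_{x_0}S^n\cong\R^n$: up to a positive normalising total-intensity constant, $\mu_{\mathcal C}(\{t\})$ is proportional to the expected number of compact components $c$ of $V(f_\infty^{x_0})$ with $t(c)=t$ lying inside any fixed bounded open set $\Omega\subset\R^n$, and $\mu_X(\{T\})$ is proportional to the analogous count with $e(c)=T$. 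So it suffices to show that these two expectations are strictly positive for every $t\in H(n-1)$ and every $T\in\mathcal T$.

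Fix a type $t$ (the tree case is identical). Theorem \ref{main theorem R a} supplies $f_0\in E_1(\R^n)$ and a compact component $c_0\in\mathcal C(f_0)$ with $t(c_0)=t$. By standard genericity inside $E_1(\R^n)$, I may perturb $f_0$ by an arbitrarily small amount so that $0$ becomes a regular value while both $c_0$ and its interior nodal nest are preserved. I then pick a bounded domain $\Omega\subset\R^n$ with smooth boundary such that $c_0$ together with its full interior nest lies in $\Omega$ and $|f_0|>\delta$ on $\partial\Omega$ for some $\delta>0$. Standard stability of transverse zero sets under $C^1$-small perturbations produces $\varepsilon>0$ such that every $f\in C^1(\overline\Omega)$ with $\|f-f_0\|_{C^1(\overline\Omega)}<\varepsilon$ contains a component of $V(f)\cap\Omega$ diffeomorphic to $c_0$ with the same interior tree.

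Next I would show that $f_0$ lies in the topological support of $f_\infty^{x_0}$ in $C^1_{\mathrm{loc}}(\R^n)$. The covariance $K(u,v)=\int_{S^{n-1}}e^{i\langle u-v,w\rangle_{g_{x_0}}}dw$ has reproducing kernel Hilbert space equal to $\{u\mapsto\int_{S^{n-1}}\hat g(w)e^{i\langle u,w\rangle}dw:\hat g\in L^2(S^{n-1})\}$. This space contains every finite plane-wave sum of unit wavenumber, and such sums are $C^\infty_{\mathrm{loc}}$-dense in $E_1(\R^n)$ by the Fourier/Paley--Wiener decomposition of entire eigenfunctions, so in particular they approximate $f_0$ in $C^1(\overline\Omega)$. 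By the Cameron--Martin characterisation of the support of a Gaussian measure,
\[
p\;:=\;\mathbb P\bigl(\|f_\infty^{x_0}-f_0\|_{C^1(\overline\Omega)}<\varepsilon\bigr)\;>\;0.
\]
Combined with the previous paragraph, the event that $V(f_\infty^{x_0})\cap\Omega$ contains a type-$t$ component has probability at least $p$, so its expected count is positive.

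To transfer the conclusion to $S^n$, \eqref{E:CovConv1} forces weak convergence of the law of $f_{\eta,\lambda}^{x_0}$ to that of $f_\infty^{x_0}$ in $C^1_{\mathrm{loc}}$. The event above is open in that topology, so the portmanteau theorem yields $\mathbb P(\text{type-}t\text{ component inside }\Omega\text{ under }f_{\eta,\lambda}^{x_0})\ge p/2$ for all $\lambda$ large; unscaling by $\lambda$, each such configuration is a component $c\subset V(f_{\eta,\lambda})\subset S^n$ of diffeomorphism type $t$ whose interior on $S^n$ realises its Euclidean interior tree (and this is automatically the smaller of the two spherical trees, since the component occupies a vanishing fraction of $S^n$). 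Feeding this positive lower bound into the SW intensity formula produces $\mu_{\mathcal C}(\{t\})>0$ and, by the same argument using Theorem \ref{main theorem R b}, $\mu_X(\{T\})>0$. I expect the most delicate step to be the support identification in the third paragraph: in order to apply Cameron--Martin one must convert the Fourier-analytic density of plane-wave sums in $E_1(\R^n)$ into $C^1$-approximation on the specific bounded set $\Omega$ produced in Theorem \ref{main theorem R a}, whose size is dictated by the prescribed topology and for which no uniform control is available a priori.
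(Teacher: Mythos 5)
Your proposal is essentially the same approach as the paper's, just with the abstract machinery unpacked. The paper's proof of Theorem \ref{main theorem} is deliberately terse: it appeals to the ``soft'' arguments of \cite{NS,NS2,SW} (specifically condition $(\rho 4)$ of \cite[Theorem 1]{NS2} and its extension in \cite{SW}) and reduces the whole thing to verifying that, for each $t\in H(n-1)$ (resp.\ $T\in\mathcal T$), there is an $f$ in the plane-wave space $T_1$ whose zero set realises $t$ (resp.\ $T$). That verification is exactly Theorem \ref{main theorem R a} (resp.\ Theorem \ref{main theorem R b}) combined with Proposition \ref{E: approximation}, which gives $C^t$-approximation of any $f\in E_1(\R^n)$ on compacta by elements of $T_1$, followed by structural stability (Thom isotopy) to preserve the component. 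Your paragraphs two through five reconstruct precisely the content behind $(\rho 4)$: stability of transverse zero sets, support of the Gaussian measure via Cameron--Martin, and transfer to $S^n$ via \eqref{E:CovConv1} and portmanteau. Your RKHS identification is correct in spirit, though strictly speaking a finite plane-wave sum corresponds to a singular measure rather than an $L^2$ density, so such sums lie in the closure of the Cameron--Martin space rather than in it; since the support of a Gaussian measure equals that closure, nothing is lost. Your closing worry is unfounded: once $t$ is fixed, the compact set $\Omega$ produced in Theorem \ref{main theorem R a} is fixed, and Proposition \ref{E: approximation} (equivalently, the density of the Cameron--Martin space) gives $C^1(\overline\Omega)$-approximation on that specific compact set; no uniformity over $t$ or over $\Omega$ is needed.
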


\begin{remark} Theorem \ref{main theorem} asserts that every topological type that can occur will do so with a positive probability for the universal distribution of topological types of random monochromatic waves in \cite{SW}. The reduction from Theorems \ref{main theorem R a} and \ref{main theorem R b}  to  Theorem \ref{main theorem} is abstract and is based on the `soft' techniques in \cite{NS, SW} (see also Section \ref{monochromatic}). In particular, it offers us no lower bounds for these probabilities. Developing such lower bounds is an interesting problem. The same applies to the tree ends.
\end{remark}

\begin{remark}
Theorem \ref{main theorem} holds for monochromatic random waves on general compact, smooth, Riemannian manifolds $(M,g)$ without boundary.  Part (i) actually holds without modification. The reason why we state the result on the round sphere $S^n$ is that, by the Jordan-Brouwer separation Theorem \cite{Li}, on $S^n$ every component of the zero set separates $S^n$ into two distinct components. This gives that the nesting graph for the zero sets is a rooted tree. On general $(M,g)$ this is not necessarily true, so there is no global way to define a tree that describes the nesting configuration of the zero set in all of $M$, for all $c \in C(f)$. However, according to \cite{NS2} almost all $c$'s localize to small coordinate patches and hence our arguments apply.\end{remark}

We end the introduction with an outline of the paper. Theorem \ref{main theorem R a}  for $n=3$ (which is the first interesting case) is proved in \cite{SW} by deformation of the eigenfunction 
\begin{equation}\label{u}
u(x,y,z)=\sin (\pi x) \sin (\pi  y) + \sin (\pi x) \sin(\pi  z) + \sin (\pi y) \sin (\pi z).
\end{equation}
The proof exploits that the space $H(2)$ is simply the set of orientable compact surfaces which are determined by their genus. So in engineering a component of  a deformation of $f$ to have a given genus it is clear what to aim for in terms of how the  singularities (all are conic) of $f=0$ resolve. For $n\geq 4$, little is known about the space $H(n-1)$ and we proceed in Section \ref{topology} quite differently. We apply Whitney's approximation Theorem to realize $t$ as an embedded real analytic submanifold of $\R^n$. Then, following some techniques in \cite{EP} we find suitable approximations of $f\in E_1(\R^n)$ and whose zero set contains a diffeomorphic copy of $t$. The construction of $f$ hinges on the Lax-Malgrange Theorem and Thom's Isotopy Theorem. As far as Theorem \ref{main theorem R b}, the case $n=2$ is resolved in \cite{SW} using a deformation of $\sin(\pi x)\sin(\pi y)$ and a combinatorial chess board type argument. In higher dimensions, for example $n=3$ we proceed in Section \ref{nesting} by deforming
\begin{equation}\label{u2}
u(x,y,z)=\sin(\pi x)\sin (\pi y)\sin (\pi z).
\end{equation}
This $f$ has enough complexity (as compared to the $u$ in \eqref{u}) to produce all elements in $\mathcal T$ after deformation. However, it is much more difficult to study. Unlike \eqref{u} or  $\sin(\pi x)\sin(\pi y)$, the zero set $u^{-1}(0)$ in \eqref{u2} has point and $1$-dimensional edge singularities.  The analysis of its resolution under deformation requires a lot of care, especially as far as engineering elements of $\mathcal T$. The pay off as we noted is that it is rich enough to prove Theorem \ref{main theorem R b}. 

In Section \ref{monochromatic} we review some of the theory of monochromatic Gaussian fields and their representations. Section \ref{topology} is devoted to the proof of Theorem \ref{main theorem R a}. Section \ref{nesting} is devoted to the proof of Theorem \ref{main theorem R b}. The latter begins with an interpolation theorem of Mergelyan type, for elements in $E_1(\R^n)$. We use that to engineer deformations of \eqref{u2} which achieve the desired tree end, this being the most delicate aspect of the paper.

\section{Monochromatic Gaussian waves}\label{monochromatic}
Our interest is in the monochromatic Gaussian field on $\R^n$ which is a special case of the band limited Gaussian fields considered in \cite{SW}, and which is fundamental in the proof of \cite[Theoem 1.1]{SW}. For $0 \leq \alpha \leq 1$, define the annulus $A_\alpha=\{ \xi \in \R^n: \; \alpha \leq |\xi| \leq 1\}$ and let $\sigma_\alpha$ be the Haar measure on $A_\alpha$ normalized so that $\sigma_\alpha(A_\alpha)=1$. Using that the transformation $\xi \mapsto -\xi$ preserves $A_\alpha$  we choose a real valued orthonormal basis $\{\phi_j\}_{j=1}^\infty$ of $L^2( A_\alpha, \sigma_\alpha)$ satisfying 
\begin{equation}\label{E: -xi}
\phi_j(-\xi)= (-1)^{\eta_j} \phi_j(\xi), \qquad \quad \eta_j \in \{0,1\}.
\end{equation}
The band limited Gaussian field $H_{n,\alpha}$ is defined to be the random real valued functions $f$ on $\R^n$ given by 
\begin{equation}\label{E: f, b}
f(x)=\sum_{j=1}^\infty b_j\, i^{\eta_j}\, \widehat {\phi_j}(x)
\end{equation}
where
\begin{equation}\label{E: hat phi}
\widehat {\phi_j}(x)= \int_{\R^n} \phi_j(\xi) e^{- i \langle x, \xi\rangle} d \sigma_\alpha(\xi)
\end{equation}
and the $b_j$'s are identically distributed, independent, real valued, standard Gaussian variables. We note that the field $H_{n, \alpha}$ does not depend on the choice of the orthonormal basis $\{\phi_j\}$.

The distributional identity $\sum_{j=1}^\infty  \phi_j(\xi)  \phi_j(\eta)=\delta(\xi-\eta)$ on $A_\alpha$ together with \eqref{E: -xi} lead to the explicit expression for the covariance function:
 \begin{equation}\label{E: covariance}
 \text{Cov}(x,y):=\mathbb E_{H_{n,\alpha}}(f(x)f(y))=\int_{\R^n} e^{i \langle x-y , \xi \rangle} d\sigma_\alpha(\xi).
 \end{equation}
From \eqref{E: covariance}, or directly from \eqref{E: f, b}, it follows that almost all $f$'s in $H_{n,\alpha}$ are analytic in $x$ \cite{AT}. For the monochromatic case $\alpha=1$ we have 
\begin{equation}\label{E: Covariance}
 \text{Cov}(x,y)=\frac{1}{(2\pi)^{\frac{n}{2}}}\frac{J_{\nu}(|x-y|)}{ |x-y|^{\nu}}, 
 \end{equation}
 where to ease notation we have set 
  \[\nu:=\frac{n-2}{2}.\]
 
 In this case there is also a natural choice of a basis for $L^2(S^{n-1},d\sigma)=L^2(A_1, \mu_1)$ given by spherical harmonics.  Let $\{Y^\ell_m\}_{m=1}^{d_\ell}$ be a real valued basis for the space of spherical harmonics  $\mathcal E_\ell(S^{n-1})$ of eigenvalue $\ell(\ell+n-2)$, where $d_\ell=\dim \mathcal E_\ell(S^{n-1})$. 
We compute the Fourier transforms for the elements of this basis.
\begin{proposition} For every  $\ell \geq 0$ and $m=1, \dots, d_\ell$, we have 
\begin{equation}\label{E: f.t.  harmonic}
 \widehat{Y^\ell_m}(x)=(2\pi)^{\frac{n}{2}}\,i^\ell\, Y^\ell_m \left(\frac{x}{|x|}\right) \frac{J_{\ell + \nu}(|x|)}{|x|^{\nu}}.
\end{equation}
\end{proposition}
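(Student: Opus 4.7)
The plan is to exploit the $SO(n)$-equivariance of the Fourier transform together with the irreducibility of $\mathcal E_\ell(S^{n-1})$, and to reduce the evaluation to a classical one-variable integral representation of the Bessel function $J_{\ell+\nu}$.

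First, I would observe that the assignment $Y \mapsto \widehat{Y\,\sigma_1}$ intertwines the natural $SO(n)$-actions on $\mathcal E_\ell(S^{n-1})$ and on functions on $\R^n$, and that the dilations $x\mapsto tx$ act diagonally on the two sides as well. Since $\mathcal E_\ell(S^{n-1})$ is $SO(n)$-irreducible, Schur's lemma forces $\widehat{Y^\ell_m}$ to be of the form $F_\ell(|x|)\, Y^\ell_m(x/|x|)$ for some function $F_\ell$ depending only on $\ell$ and $n$ (and not on $m$, nor on the particular choice of $Y^\ell_m$). This already accounts for the angular factor $Y^\ell_m(x/|x|)$ in \eqref{E: f.t.  harmonic}.

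Second, to pin down $F_\ell$, I would specialize to a zonal harmonic based at $e_n$, i.e.\ take (up to the normalization fixed by the addition formula) $Y^\ell(\xi)= c_{n,\ell}\,C_\ell^\nu(\xi\cdot e_n)$, where $C_\ell^\nu$ denotes the Gegenbauer polynomial, and evaluate at $x=|x|e_n$. Introducing polar coordinates about the $e_n$-axis, so that $\xi\cdot e_n = t$ and $d\sigma_1(\xi)=\tfrac{|S^{n-2}|}{|S^{n-1}|}(1-t^2)^{\nu-1/2}\,dt\,d\omega$ on $[-1,1]\times S^{n-2}$, the integral over $\xi$ collapses to
\[
\widehat{Y^\ell}(|x|e_n) = c_{n,\ell}\,\frac{|S^{n-2}|}{|S^{n-1}|}\int_{-1}^{1} C_\ell^\nu(t)\, e^{-i|x|t}\,(1-t^2)^{\nu-1/2}\,dt.
\]

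Third, I would invoke the classical Gegenbauer--Poisson integral representation of the Bessel function,
\[
\int_{-1}^{1} C_\ell^\nu(t)\,e^{-i r t}\,(1-t^2)^{\nu-1/2}\,dt \;=\; \frac{\sqrt{\pi}\,\Gamma(\nu+\tfrac12)\,\Gamma(\ell+2\nu)}{\ell!\,\Gamma(2\nu)}\cdot i^{\ell}\cdot\frac{J_{\ell+\nu}(r)}{(r/2)^{\nu}},
\]
(whichever normalization convention is used, only the dependence on $\ell$ via $i^\ell J_{\ell+\nu}(r)/r^\nu$ is needed). Combining with the constant $c_{n,\ell}$ from the addition formula and the ratio $|S^{n-2}|/|S^{n-1}|$, together with the value of the duplication-formula constants, all $\ell$-dependent prefactors telescope away and one is left with $(2\pi)^{n/2}\,i^\ell\,J_{\ell+\nu}(|x|)/|x|^\nu$. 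Restoring the $Y^\ell_m(x/|x|)$ factor from Step~1 yields \eqref{E: f.t.  harmonic}. The principal obstacle in writing this out is purely bookkeeping: reconciling the normalization conventions for $\sigma_1$, for the zonal/addition formula constant $c_{n,\ell}$, and for the Gegenbauer integral, so that all the $\ell$- and $n$-dependent gamma factors cancel.
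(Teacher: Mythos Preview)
Your proposal is correct and follows the same arc as the paper's proof: first establish the separation-of-variables form $\widehat{Y^\ell_m}(x)=F_\ell(|x|)\,Y^\ell_m(x/|x|)$, then compute the radial factor by specializing to a zonal harmonic and reducing to the one-variable Gegenbauer integral $\int_{-1}^1 e^{-irt}\,C_\ell^\nu(t)\,(1-t^2)^{\nu-1/2}\,dt$, which is evaluated via the classical Bessel identity (Gradshteyn--Ryzhik~(7.321) in the paper). The only genuine difference is in how the first step is justified. The paper invokes Selberg's theory of point pair invariants on the rank-one symmetric space $S^{n-1}$: any kernel of the form $h(\langle\dot x,\dot y\rangle)$ acts on each $\mathcal E_\ell$ by a scalar $\lambda_h(\ell)$, computable by integrating against the zonal spherical function $Z^\ell_{\dot x}=C^\nu_\ell(\langle\dot x,\cdot\rangle)/C^\nu_\ell(1)$. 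You instead argue via $SO(n)$-equivariance of the Fourier transform together with irreducibility of $\mathcal E_\ell$ and Schur's lemma. These are two packagings of the same phenomenon (the Funk--Hecke theorem), and both land on the identical Gegenbauer integral. The paper's framing highlights the spherical-transform structure and generalizes cleanly to other symmetric spaces; your route is lighter on prerequisites and may be more transparent to readers unfamiliar with the point-pair-invariant formalism. One small point: your Schur argument tacitly uses that each $\mathcal E_\ell$ occurs with multiplicity one in $L^2(S^{n-1})$, which is of course standard but worth making explicit.
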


\begin{proof}
We give a proof using the theory of point pair invariants \cite{Sel} which places such calculations in a general and conceptual setting. The sphere $S^{n-1}$ with its round metric is a rank $1$ symmetric space and $\langle \dot x, \dot y \rangle$ for $\dot x, \dot y \in S^{n-1}$ is a point pair invariant (here $\langle \, , \, \rangle$ is the standard inner product on $\R^n$ restricted to $S^{n-1}$). Hence, by the theory of these pairs we know that for every function $h:\R \to \C$ we have
\begin{equation}\label{E: int=lambda}
\int_{S^{n-1}} h(\langle \dot x ,\dot y \rangle ) \,Y (\dot y) \,d\sigma(\dot y)= \lambda_h (\ell) Y (\dot x), 
\end{equation}
where $Y$ is any spherical harmonic of degree $\ell$ and $\lambda_h(\ell)$ is the spherical transform. The latter can be computed explicitly using the zonal spherical function of degree $\ell$.
Fix any $\dot x \in S^{n-1}$  and  let $Z^\ell_{\dot x}$ be the unique spherical harmonic of degree $\ell$ which is rotationally invariant by motions of $S^{n-1}$ fixing $\dot x$ and so that $Z^\ell_{\dot x}(\dot x)=1$. Then, 
\begin{equation}\label{E: lambda}
\lambda_h(\ell)=\int_{S^{n-1}} h(\langle \dot x, \dot y \rangle) Z^\ell_{\dot x}(\dot y)\, d\sigma(\dot y).
\end{equation}
The function $Z^\ell_{\dot x}(\dot y)$ may be expressed in terms of the Gegenbauer polynomials  \cite[(8.930)]{GR} as
\begin{equation}\label{E: zonal, gegenb}
 Z^\ell_{\dot x}(\dot y)=\frac{C_\ell^\nu \big(\big \langle \dot x, \dot y \big \rangle \big) }{C_\ell^{\nu}(1) }.
\end{equation}
Now, for $x \in \R^n$,
\[ \widehat{Y^\ell_m}(x)=\int_{S^{n-1}} h_x\big( \big \langle \tfrac{x}{|x|}, \dot y  \big \rangle \big)\, Y^\ell_m(\dot y) d\sigma(\dot y),\]
where we have set $h_x(t)=e^{-i|x| t}$. Hence, by \eqref{E: int=lambda} we have
\[ \widehat{Y^\ell_m}(x)= \lambda_{h_x} (\ell) \, Y^\ell_m\big( \tfrac{x}{|x|} \big),\]
with 
\begin{equation}\label{E: computation of lambda}
\lambda_{h_x}(\ell)
= \int_{S^{n-1}} e^{-i |x| \big \langle  \tfrac{x}{|x|}\, ,\, \dot y \big \rangle }Z^\ell_{\dot x}(\dot y)\, d\sigma(\dot y)
= \frac{\text{vol}(S^{n-2})}{C_\ell^\nu(1) } \int_{-1}^1 e^{-it|x|} \, C^\nu_\ell(t) (1-t^2)^{\nu -\frac{1}{2}} \, dt.
\end{equation}
The last term in \eqref{E: computation of lambda} can be computed using   \cite[(7.321)]{GR}. This gives
\[\lambda_{h_x}(\ell)=(2\pi)^{\frac{n}{2}}\, i^\ell\, \frac{J_{\ell+\nu}(|x|)}{|x|^\nu},\]
as desired.

\end{proof} 

\begin{corollary}
The monochromatic Gaussian ensemble $H_{n,1}$ is given by random $f$'s  of the form
\begin{equation*}
{f}(x)=(2\pi)^{\frac{n}{2}} \sum_{\ell=0}^\infty \sum_{m=1}^{d_{\ell}} b_{\ell,m} \,  Y^\ell_m \left(\frac{x}{|x|}\right) \frac{J_{\ell + \nu}(|x|)}{|x|^{\nu}},
\end{equation*}
where the $b_{\ell, m}$'s are i.i.d  standard  Gaussian variables.
\end{corollary}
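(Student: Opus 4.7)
The plan is to substitute the Fourier transform formula \eqref{E: f.t.  harmonic} just proven for spherical harmonics into the general definition of $H_{n,\alpha}$ given in \eqref{E: f, b}, using the basis $\{Y^\ell_m\}_{\ell \geq 0,\, 1 \leq m \leq d_\ell}$ of real-valued spherical harmonics as the orthonormal basis of $L^2(S^{n-1},d\sigma) = L^2(A_1,\sigma_1)$. Since the field $H_{n,1}$ is independent of the choice of orthonormal basis satisfying \eqref{E: -xi}, this substitution is legitimate once we verify the parity hypothesis.

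First I would record that the spherical harmonics $Y^\ell_m$ satisfy $Y^\ell_m(-\xi)=(-1)^\ell Y^\ell_m(\xi)$, so the parity index in the notation of \eqref{E: -xi} is $\eta_{\ell,m}=\ell \bmod 2$. Next I would insert \eqref{E: f.t.  harmonic} into \eqref{E: f, b} to obtain
\begin{equation*}
f(x) = (2\pi)^{\frac{n}{2}} \sum_{\ell=0}^\infty \sum_{m=1}^{d_\ell} b_{\ell,m}\, i^{\eta_{\ell,m}+\ell}\, Y^\ell_m\!\left(\tfrac{x}{|x|}\right) \frac{J_{\ell+\nu}(|x|)}{|x|^\nu}.
\end{equation*}
The two cases of the parity of $\ell$ then need to be checked separately: if $\ell$ is even then $\eta_{\ell,m}=0$ and $i^\ell \in \{+1,-1\}$; if $\ell$ is odd then $\eta_{\ell,m}=1$ and $i^{\ell+1} \in \{+1,-1\}$. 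In both cases the factor $i^{\eta_{\ell,m}+\ell}$ is a real sign $\varepsilon_\ell \in \{\pm 1\}$ depending only on $\ell$, which is precisely what is needed for $f$ to be real valued.

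Finally I would absorb these signs into the coefficients: since the $b_{\ell,m}$ are i.i.d.\ standard Gaussians and $N(0,1)_{\R}$ is symmetric, the family $\{\varepsilon_\ell\, b_{\ell,m}\}$ has the same joint distribution as $\{b_{\ell,m}\}$. This yields the stated expression
\begin{equation*}
{f}(x)=(2\pi)^{\frac{n}{2}} \sum_{\ell=0}^\infty \sum_{m=1}^{d_{\ell}} b_{\ell,m} \,  Y^\ell_m \!\left(\frac{x}{|x|}\right) \frac{J_{\ell + \nu}(|x|)}{|x|^{\nu}}.
\end{equation*}

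There is no real obstacle here beyond bookkeeping of the parity factors $i^{\eta_j}$ and $i^\ell$; the only substantive point is that their combined contribution is always $\pm 1$, which in turn confirms that $f$ is real and allows the signs to be washed out by the symmetry of the standard Gaussian distribution. The proposition supplies everything else.
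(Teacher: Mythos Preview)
Your argument is correct and is precisely the computation the paper leaves implicit: the corollary is stated without proof, as it follows immediately from substituting \eqref{E: f.t.  harmonic} into the definition \eqref{E: f, b} with the spherical-harmonic basis, exactly as you describe. Your bookkeeping of the parity factors $i^{\eta_{\ell,m}}\cdot i^\ell$ and the absorption of the resulting sign into the symmetric Gaussians are the only details to check, and you have handled both correctly.
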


The functions $x \mapsto Y^\ell_m \left(\frac{x}{|x|}\right) \frac{J_{\ell + \nu}(|x|)}{|x|^{\nu}}$, $x \mapsto e^{i \langle x, \xi\rangle}$ with $|\xi|=1$, and those in \eqref{E: f, b} for which the series converges rapidly (eg. for almost all $f$ in $H_{n,1}$), all satisfy \eqref{E:0}, that is $f \in E_1(\R^n)$.
In addition, consider the subspaces $P_1$ and $T_1$ of $E_1(\R^n)$ defined by
\[P_1:= \text{span}\left\{x \mapsto Y^\ell_m \left(\frac{x}{|x|}\right) \frac{J_{\ell + \nu}(|x|)}{|x|^{\nu}}:\; \, \ell \geq 0, \; m=1, \dots, d_\ell \right\},\] 
\[T_1:= \text{span}\left\{x\mapsto\frac{e^{i \langle x, \xi\rangle} +e^{-i \langle x, \xi\rangle} }{2}\, ,\,x\mapsto \frac{e^{i \langle x, \xi\rangle} -e^{-i \langle x, \xi\rangle} }{2i}:\;\; |\xi|=1 \right\} .\] 

\begin{proposition}\label{E: approximation}
Let  $f \in E_1(\R^n)$ and let $K \subset \R^n$ be a compact set. Then, for any $t \geq 0$ and $\ep>0$ there are $g \in P_1$ and $h \in T_1$ such that 
\[\|f-g\|_{C^t(K)}<\ep \qquad \text{and}\qquad \|f-h\|_{C^t(K)}<\ep.\]
That is, we can approximate $f$ on compact subsets in the $C^t$-topology  by elements of $P_1$ and $T_1$ respectively. 
\end{proposition}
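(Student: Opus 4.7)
The plan is to deduce both approximation results from a single claim: every $f\in E_1(\R^n)$ admits a Fourier--Bessel expansion
\[
f(x)=\sum_{\ell=0}^{\infty}\sum_{m=1}^{d_\ell} c_{\ell,m}\,Y^\ell_m\!\left(\tfrac{x}{|x|}\right)\frac{J_{\ell+\nu}(|x|)}{|x|^{\nu}},
\]
convergent in the $C^t$-topology on every compact subset of $\R^n$. The partial sums of this series lie in $P_1$ and deliver the required $g$. To produce $h\in T_1$ I would then invert the formula \eqref{E: f.t.  harmonic}, expressing each generator of $P_1$ as $(2\pi)^{-n/2} i^{-\ell}\int_{S^{n-1}} Y^\ell_m(\xi)\,e^{-i\langle x,\xi\rangle}\,d\sigma_1(\xi)$, approximating this integral by a Riemann sum over a fine partition of $S^{n-1}$, and symmetrizing in $\xi\mapsto -\xi$ using the parity \eqref{E: -xi} of $Y^\ell_m$ to turn the sum into a real linear combination of $\cos\langle x,\xi_k\rangle$ and $\sin\langle x,\xi_k\rangle$ with $|\xi_k|=1$, i.e.\ an element of $T_1$.

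For the expansion itself I would use separation of variables. Writing $f(r\dot x)=\sum_{\ell,m} a_{\ell,m}(r)\,Y^\ell_m(\dot x)$ for $r>0$ and $\dot x\in S^{n-1}$, the fact that $Y^\ell_m$ is an eigenfunction of the round Laplacian with eigenvalue $-\ell(\ell+n-2)$ combined with $\Delta f+f=0$ forces $a_{\ell,m}$ to satisfy a Bessel equation in $r$. Smoothness of $f$ at the origin rules out the singular solution and gives $a_{\ell,m}(r)=c_{\ell,m}\,J_{\ell+\nu}(r)/r^{\nu}$. Convergence in $C^t(K)$ then follows from three standard ingredients: (i) the factorial decay $J_{\ell+\nu}(r)/r^\nu=O\!\big((r/2)^\ell/\Gamma(\ell+\nu+1)\big)$ uniformly in $r$ on bounded intervals, stable under $r$-differentiation; (ii) the polynomial bound $\|Y^\ell_m\|_{C^t(S^{n-1})}\lesssim \ell^{t+(n-2)/2}$; and (iii) polynomial-in-$\ell$ bounds on $|c_{\ell,m}|$, which I would extract from the identity $c_{\ell,m}\cdot J_{\ell+\nu}(r_0)/r_0^\nu=\mathrm{const}\cdot \int_{S^{n-1}} f(r_0\dot x) Y^\ell_m(\dot x)\,d\sigma(\dot x)$ evaluated on a sphere $\{|x|=r_0\}$ slightly outside $K$, using smoothness of $f|_{|x|=r_0}$.

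The main technical obstacle is this uniform $C^t$-convergence: because $E_1(\R^n)$ contains functions of arbitrary exponential growth, the coefficients cannot be controlled by any crude global bound on $f$, and the denominator $J_{\ell+\nu}(r_0)/r_0^\nu$ appearing in the formula for $c_{\ell,m}$ is anomalously small near its Bessel zeros. I would sidestep this by averaging $r_0$ over a short interval outside $K$ weighted by a nonnegative bump function chosen so that its integral against $r\mapsto J_{\ell+\nu}(r)/r^\nu$ is uniformly bounded below in $\ell$; this gives the polynomial bounds on $|c_{\ell,m}|$ needed in (iii). Once the $P_1$-approximation is in place, the Riemann-sum step in the first paragraph is routine, since the integrand $Y^\ell_m(\xi)e^{-i\langle x,\xi\rangle}$ is smooth in $\xi$ and $C^t$-equicontinuous in $\xi$ uniformly for $x\in K$.
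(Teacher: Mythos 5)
Your overall strategy---separation of variables, derivation of the Bessel ODE, ruling out the singular solution by regularity at the origin, truncation of the Fourier--Bessel series for $P_1$, and a symmetrized Riemann sum over $S^{n-1}$ for $T_1$---is exactly the route taken in the paper. The paper is terse about the convergence of the series (it simply asserts it converges uniformly on compacts together with derivatives, appealing to analyticity), and you try to fill this in. However, your ingredient (iii) contains a genuine error. The goal of obtaining polynomial-in-$\ell$ bounds on $|c_{\ell,m}|$ by averaging $r_0$ against a nonnegative bump weight $w$ cannot be achieved: for any such $w$ supported in a bounded interval, $\int w(r)\,J_{\ell+\nu}(r)/r^\nu\,dr\to 0$ super-polynomially fast as $\ell\to\infty$, because $J_{\ell+\nu}(r)/r^\nu=O\big((r/2)^\ell/\Gamma(\ell+\nu+1)\big)$ \emph{uniformly} on bounded $r$-sets (the Bessel-zero phenomenon you worry about only affects finitely many $\ell$; the relevant obstruction is the factorial decay, not the zeros). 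Consequently no such uniform lower bound exists, and in fact a polynomial bound on $|c_{\ell,m}|$ is simply false in general: the coefficients of an entire eigenfunction may grow factorially fast, compensated precisely by the factorial decay of the Bessel factor.

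The convergence claim is nonetheless true, and the fix is to estimate the product $c_{\ell,m}\,J_{\ell+\nu}(r)/r^\nu$ directly rather than $|c_{\ell,m}|$ alone. Fix $r_0>\sup_{x\in K}|x|$. Your identity gives $\big|c_{\ell,m}\,J_{\ell+\nu}(r_0)/r_0^\nu\big|=\big|\int_{S^{n-1}}f(r_0\dot x)Y^\ell_m(\dot x)\,d\sigma\big|$, which is bounded (in fact decays geometrically in $\ell$ since $f$ is real analytic on $\{|x|=r_0\}$). For $\ell$ large, $J_{\ell+\nu}$ has no zeros in $(0,r_0]$ (its first positive zero exceeds $\ell+\nu$), and the series representation of $J_{\ell+\nu}$ gives the ratio bound $\sup_{0<r\leq R}\big|J_{\ell+\nu}(r)/r^\nu\big|\big/\big|J_{\ell+\nu}(r_0)/r_0^\nu\big|=O\big((R/r_0)^\ell\big)$ for any $R<r_0$. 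Combining these gives geometric decay of $\sup_{K}\big|c_{\ell,m}\,J_{\ell+\nu}(|x|)/|x|^\nu\big|$, which together with your ingredient (ii) and the corresponding derivative bounds yields the required $C^t(K)$-convergence. With this correction, the remainder of your argument---the Riemann-sum passage from $P_1$ to $T_1$ using the parity of $Y^\ell_m$---agrees with the paper's proof.
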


\begin{proof}
Let $f\in E_1$. Since $f$ is analytic we can expand it in a rapidly convergent series in the $Y^\ell_m$'s. That is, 
\[f(x)=\sum_{\ell=0}^\infty \sum_{m=1}^{d_\ell} a_{m,\ell}(|x|)Y^\ell_m( \tfrac{x}{|x|}).\]
Moreover, for $r>0$,
\begin{equation}\label{E: fourier a's}
\int_{S^{n-1}}|f(r \dot x)|^2\, d\sigma(\dot x)= \sum_{\ell=0}^\infty \sum_{m=1}^{d_\ell} |a_{m,\ell}(r)|^2.
\end{equation}
In polar coordinates, $(r, \theta) \in (0, +\infty)\times S^{n-1}$,   the Laplace operator in $\R^n$  is given by  
\[\Delta= \partial_r^2 + \frac{n-1}{r} \partial_r  +\frac{1}{r^2} \Delta_{S^{n-1}},\] 
and hence for each $\ell,m$ we have that 
 \begin{equation}\label{E: a}
 r^2a_{m,\ell}''(r) + (n-1) r a_{m,\ell}'(r) +(r^2- \ell(\ell+n-2))a_{m,\ell}(r)=0.
 \end{equation}
where $\ell$ is some positive integer.
There are two linearly independent solutions to \eqref{E: a}. One is $r^{-\nu}J_{\ell+\nu}(r)$ and the other blows up as $r \to 0$. Since the left hand side of \eqref{E: fourier a's} is finite as $r\to 0$, it follows that the $ a_{m,\ell}$'s cannot pick up any component of the blowing up solution. That is, for $r \geq 0$
\[ a_{m,\ell}(r)= c_{\ell,m}  \frac{J_{\ell +\nu}(r)}{r^\nu},\]
for some $c_{m,\ell} \in \R$.
Hence, 
\begin{equation}\label{E:  expansion for f}
{f}(x)=\sum_{\ell=0}^\infty \sum_{m=1}^{d_{\ell}} c_{\ell,m} \; Y^\ell_m \left(\frac{x}{|x|}\right) \frac{J_{\ell + \nu}(|x|)}{|x|^{\nu}}.
\end{equation}
Furthermore, this series converges absolutely and uniformly on compact subsets, as also do its derivatives. Thus, $f$ can be approximated by members of $P_1$ as claimed, by simply truncating the series in \eqref{E:  expansion for f}. 

To deduce the same for $T_1$ it suffices to approximate each fixed $Y^\ell_m \left(\frac{x}{|x|}\right) \frac{J_{\ell + \nu}(|x|)}{|x|^{\nu}}$. To this end let $\xi_1, -\xi_1, \xi_2, -\xi_2,\dots ,\xi_N, -\xi_N$ be a sequence of points in $S^{n-1}$ which become equidistributed with respect to $d\sigma$  as $N \to \infty$.
Then, as $N\to \infty$,
\begin{equation}\label{E: limit}
 \frac{1}{2N} \sum_{j=1}^N \left( e^{-i \langle x, \xi_j \rangle} Y^\ell_m(\xi_j)+ (-1)^\ell e^{i \langle x, \xi_j \rangle} Y^\ell_m(\xi_j)\right) \longrightarrow \int_{S^{n-1}} e^{-i \langle x, \xi \rangle} Y^\ell_m(\xi) \,d\sigma(\xi).
 \end{equation}
The proof follows since $(2\pi)^{\frac{n}{2}}\, i^\ell\, Y^\ell_m \left(\frac{x}{|x|}\right) \frac{J_{\ell + \nu}(|x|)}{|x|^{\nu}}= \int_{S^{n-1}} e^{-i \langle x, \xi \rangle} Y^\ell_m(\xi) \,d\sigma(\xi)$. Indeed, the convergence in \eqref{E: limit} is uniform over compact subsets in $x$. 
\end{proof}

\begin{remark}
For $\Omega \subset \R^n$ open, let $E_1(\Omega)$ denote the eigenfunctions on $\Omega$ satisfying $\Delta f(x)+f(x)=0$ for $x \in \Omega$. Any function $g$ on $\Omega$ which is a limit (uniform over compact subsets of $\Omega$) of members of $E_1$ must be in $E_1(\Omega)$. While the converse is not true in general, note that if $\Omega=B$ is a ball in $\R^n$, then the proof of Proposition \ref{E: approximation} shows that the uniform limits of members of $E_1$ (or $P_1$, or $T_1$) on compact subsets in $B$ is precisely $E_1(B)$. 
\end{remark}

With these equivalent means of approximating functions by suitable members of $H_{n,1}$, and particularly $E_1(\R^n)$, we are ready to prove Theorems \ref{main theorem R a} and \ref{main theorem R b}. Indeed, as shown in \cite{SW} the extension of condition $(\rho 4)$ of \cite[Theorem 1]{NS2} suffices. Namely, for $c \in H(n-1)$ it is enough to find an $f \in T_1$ with $f^{-1}(0)$ containing $c$ as one of its components for Theorem \ref{main theorem R a}, and for $T \in \mathcal T$ it suffices to find an $f \in T_1$ such that $e(c)=T$ for some component $c$ of $f^{-1}(0)$.

\section{Topology of the zero set components}\label{topology}

In this section we prove Theorem \ref{main theorem R a}.
By the discussion  above it follows that given  a representative $c$ of a class $t(c) \in H(n-1)$,  it suffices to find $f \in E_1(\R^n)$  for which $\mathcal C(f)$ contains a diffeomorphic copy of $c$. 

To begin the proof we claim that we may assume that $c$ is real analytic. Indeed, if we start with $\tilde{c}$ smooth, of the desired topological type, we may construct a tubular neighbourhood $V_{\tilde c}$ of $\tilde {c}$  and a smooth  function 
$$H_{\tilde c}:V_{\tilde c} \to \R \qquad \text{with}\qquad \tilde c= H_{\tilde c}^{-1}(0).$$ Note that without loss of generality we may assume that   $\inf_{x \in V_{\tilde c}}\|\nabla H_{\tilde c}(x)\|>0$.
Fix any $\epsilon>0$. We apply Thom's isotopy Theorem \cite[Thm 20.2]{AR} to obtain the existence of a constant $\delta_{\tilde c}>0$ 
so that for any function $F$ with $\|F-H_{\tilde c}\|_{C^1(V_{\tilde c})} <\delta_{\tilde c}$ there exists $\Psi_{F} :\R^n \to \R^n$ diffeomorphism with
$$\Psi_{F}(\tilde c)= F^{-1}(0) \cap V_{\tilde c}.$$
To construct a suitable $F$ we use Whitney's approximation Theorem \cite[Lemma 6]{Wh} which yields the existence of a real analytic approximation $F:V_{\tilde c} \to \R^{m_{\tilde c}}$  of $H_{\tilde c}$ that satisfies $\|F-H_{\tilde c}\|_{C^1(V_{\tilde c})}< \delta_{\tilde c}$. It follows that $\tilde c$ is diffeomorphic to $c:=\Psi_{F}(\tilde c)$ and $c$ is real analytic as desired. 

By the Jordan-Brouwer Separation Theorem \cite{Li}, the hypersurface $c$ separates $\R^n$ into two connected components. We write $A_c$ for the corresponding bounded component of $\R^n \backslash c$.
Let $\lambda^2$ be the first Dirichlet eigenvalue for the domain $A_c$ and let $h_\lambda$ be the corresponding eigenfunction: 
$$\begin{cases}
(\Delta + \lambda^2 )h_\lambda(x)=0& x \in \overline{A_c},\\
 h_\lambda(x)=0& x \in c.
\end{cases}$$
Consider the rescaled function $$h(x):=h_\lambda(x/\lambda),$$ defined on the rescaled domain $\lambda A_c:=\{x \in \R^n:  x/\lambda \in A_c\}$. 
Since $(\Delta + 1)h=0$ in $\overline{\lambda A_c}$, and $\partial (\lambda A_c)$ is real analytic, $h$ may be extended to some open set $B_c\subset \R^n$ with $\overline{\lambda A_c} \subset B_c$ so that
$$\begin{cases}
(\Delta + 1)h(x)=0& x \in B_c,\\
 h(x)=0& x \in  \lambda c,
\end{cases}$$
where $\lambda c$ is the rescaled hypersurface $\lambda c:= \{x \in \R^n:\; x/\lambda \in c\}$.
Note that since $h_\lambda$ is the first Dirichlet eigenfunction, then we know that there exists a tubular neighbourhood $V_{\lambda c}$ of $\lambda c$ on which  $\inf_{x \in V_{\lambda c}}\|\nabla h(x)\|>0$ (see Lemma 3.1 in \cite{BHM}). Without loss of generality assume that  $V_{\lambda c}\subset B_c$.

We apply Thom's isotopy Theorem \cite[Thm 20.2]{AR} to obtain the existence of a constant $\delta>0$ 
so that for any function $f$ with $\|f-h\|_{C^1(V_{\lambda c})} <\delta$ there exists $\Psi_f :\R^n \to \R^n$ diffeomorphism so that
$$\Psi_f(\lambda c)= f^{-1}(0) \cap V_{\lambda c}.$$
Since $\R^n \backslash B_c$ has no compact components,  Lax-Malgrange's Theorem \cite[p. 549]{Kr} yields the existence of a global solution $f:\R^n \to \R$ to the elliptic equation $(\Delta + 1)f=0$ in $\R^n$ with $$\|f-h\|_{C^1(B_c)}<\delta.$$
We have then constructed a solution to $(\Delta + 1)f=0$ in $\R^n$, i.e. $f\in E_1$,  for which $f^{-1}(0)$ contains a diffeomorphic copy of $c$ (namely, $\Psi_f(\lambda c))$. This concludes the proof of the theorem.

\qed

 We note that the problem of finding a solution  to $(\Delta+1)f=0$ for which $C(f)$ contains a diffeomorphic copy of $c$ is related to the work \cite{EP} of A.~ Enciso and D.~Peralta-Salas. In \cite{EP} the authors seek to find solutions to the problem $(\Delta -q)f=0$ in $\R^n$ so that $C(f)$  contains a diffeomorphic copy of $c$, where $q$ is a {nonnegative}, real analytic, potential and $c$ is a (possibly infinite) collection of compact or unbounded ``tentacled" hypersurfaces. The construction of the solution $f$ that we presented is shares ideas with \cite{EP}.  Since our setting and goals are simpler than theirs, the construction  of $f$  is much shorter and straightforward.


\section{Nesting of nodal domains} \label{nesting}

The proof of Theorem \ref{main theorem R b} consists in perturbing the zero set of the eigenfunction 
$u_0(x_1,\dots,x_n)=\sin(\pi x_1)\dots\sin(\pi x_n)$ so that the zero set of the perturbed function  will have the desired nesting. The nodal domains of $u_0$ build a n-dimensional chess board made out of unit cubes. By adding a small perturbation to $u_0$ the changes of topology in $u_0^{-1}(0)$ can only occur along the singularities of $u_0^{-1}(0)$. Therefore, we will build an eigenfunction $f$, satisfying $-\Delta f=f$, by prescribing it  along the singularities $L=\cup_{a,b \in \Z} \cup_{i,j=1,\; i\neq j}^n \{(x_1, \dots, x_n) \in \R^n:\; x_i=a, \;x_j=b \}$ of the zero set of $u_0$. We then construct a new eigenfunction $u_\ep = u_0 + \ep f$ which will have the desired nesting among a subset of its nodal domains.   The idea is to prescribe $f$ on the singularities of the zero set of $u_0$ in such a way that two adjacent cubes of the same sign will either glue or disconnect along the singularity.    The following theorem shows that one can always find  a solution $f$ to $-\Delta f=f$ with prescribed values on a set of measure zero (such as  $L$).
We prove this result following the first step of Carleson's proof \cite{Car} of Mergelyan's classical Theorem about analytic functions.

\begin{theorem}\label{T:perturbation}
Let $K \subset \R^n$  be a compact set with  Lebesgue measure $0$ and  so that $\R^n \backslash K$ is connected.   Then, for every $\delta>0$ and $h\in C_c^2(\R^n)$  there exists $f:\R^n \to \R$ satisfying 
\[-\Delta f=f  \quad \text{and} \quad   \sup_{ K}\{|f-h|+ \|\nabla f-\nabla h\|\} \leq \delta.\] 
\end{theorem}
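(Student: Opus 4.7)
The plan is to adapt the first step of Carleson's proof of Mergelyan's theorem to the Helmholtz operator $L := -\Delta - 1$. I set $g := -\Delta h - h$, which lies in $C_c(\R^n)$ since $h\in C_c^2(\R^n)$, and fix a real, locally integrable fundamental solution $\Phi$ of $L$ on $\R^n$ (so $L\Phi = \delta_0$), with the standard singularity $|\Phi(x)| = O(|x|^{2-n})$ (respectively $O(|\log|x||)$ if $n=2$) and $|\nabla\Phi(x)| = O(|x|^{1-n})$ near the origin; both $|\Phi|$ and $|\nabla\Phi|$ then lie in $L^1_{\mathrm{loc}}(\R^n)$. For a small parameter $\eta>0$, let $U_\eta(K)$ denote the $\eta$-neighbourhood of $K$ and pick a cutoff $\chi\in C_c^\infty(\R^n)$ with $\chi\equiv 1$ on an open neighbourhood $V$ of $K$ and $\mathrm{supp}(\chi)\subset U_\eta(K)$. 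Define
\[ \tilde f := h - \Phi * (\chi g). \]
Since $Lh = g$ and $L(\Phi * \chi g) = \chi g$, one has $L\tilde f = (1-\chi)g$, which vanishes on $V$. Thus $\tilde f$ is a real-valued Helmholtz eigenfunction on the open set $V\supset K$.

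The main analytic task is to show that $\|\tilde f - h\|_{C^1(K)} = \|\Phi * (\chi g)\|_{C^1(K)}$ can be made smaller than $\delta/2$. For $x\in K$, splitting the convolution at scale $r$ into contributions from $B(x,r)$ and $U_\eta(K)\setminus B(x,r)$ gives
\[ |(\Phi * \chi g)(x)| \le \|g\|_\infty\Bigl(\int_{B(0,r)}|\Phi(z)|\,dz \;+\; |U_\eta(K)|\cdot\sup_{|z|\ge r}|\Phi(z)|\Bigr), \]
and the same bound holds with $\Phi$ replaced by $\nabla\Phi$. Local integrability makes the first term $O(r^2)$ (resp.\ $O(r)$ for the gradient), and the fact that $K$ has Lebesgue measure zero forces $|U_\eta(K)|\to 0$, killing the second term after $r$ has been fixed. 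Choosing $r$ small and then $\eta$ small yields the desired $C^1$ estimate.

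Finally, since $\R^n\setminus K$ is connected, I apply the Lax--Malgrange approximation theorem for the constant-coefficient elliptic operator $L$ (the $C^1$ version already used in Section \ref{topology}): any solution of $Lu=0$ on an open neighbourhood of $K$ is a $C^1(K)$-limit of entire solutions of $Lf=0$ on $\R^n$. Applied to $\tilde f$, this produces $f\in E_1(\R^n)$ with $\|f-\tilde f\|_{C^1(K)} < \delta/2$, and the triangle inequality gives $\|f-h\|_{C^1(K)} < \delta$, as required. The main obstacle is the $C^1$ convolution estimate: the singularity of $\Phi$ competes with the fact that $\mathrm{supp}(\chi g)$ is forced to lie close to $K$, and the argument closes only because $K$ has measure zero, which allows us to trade off the near-singularity contribution (controlled by the local $L^1$ norm of $\Phi$ and $\nabla\Phi$) against the far contribution (killed by shrinking $|U_\eta(K)|$).
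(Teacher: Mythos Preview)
Your proof is correct but takes a genuinely different route from the paper's. The paper argues by duality: it recasts the claim as $\mathcal B_K \subset \overline{\mathcal A_K}$ in $\bigoplus_{k=0}^n C(K)$ and, via Hahn--Banach, reduces to showing that any measure-tuple $(\nu_0,\ldots,\nu_n)$ supported on $K$ that annihilates all $(\psi,\nabla\psi)$ with $\psi\in\ker(\Delta+1)$ must also annihilate $(\phi,\nabla\phi)$ for every $\phi\in C_c^2$. Using a fundamental solution and Fubini (this is where $|K|=0$ enters) one rewrites $D(\phi,\nabla\phi)=\int_{\R^n\setminus K}(\Delta+1)\phi\cdot F$, and then shows the potential $F$ vanishes on all of $\R^n\setminus K$ by analytic continuation from a neighbourhood of infinity (this is where connectedness enters, together with Proposition~\ref{E: approximation}). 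Your approach is instead constructive: first manufacture a \emph{local} solution $\tilde f=h-\Phi\ast(\chi g)$ on a neighbourhood of $K$ that is already $C^1$-close to $h$ on $K$ (the measure-zero hypothesis drives the near/far splitting of the convolution), and then globalize via Lax--Malgrange (this is where connectedness enters). The modular structure is arguably cleaner, at the cost of importing Lax--Malgrange as a black box, whereas the paper's duality argument is self-contained modulo the soft Proposition~\ref{E: approximation}. One small point worth making explicit: the version of Lax--Malgrange actually cited in Section~\ref{topology} takes as hypothesis that the complement of an \emph{open} set has no compact component, so to match it literally you should observe that $V$ can be shrunk (removing thin tubes in $\R^n\setminus K$ that connect any bounded component of $\R^n\setminus V$ to infinity) so that $\R^n\setminus V$ is connected; alternatively, invoke the compact-set Runge form of the theorem directly.
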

\begin{remark}
In the statement of the theorem the function $h\in C_c^2(\R^n)$ can be replaced by $h\in C_c^1(\Omega)$, where $\Omega \subset \R^n$ is any open set with $K \subset \Omega$. This is because $C_c^2(\R^n)$ is dense in  $C_c^1(\Omega)$ in the $C^1$-topology.
\end{remark}
\begin{proof}
Consider the sets
\begin{align*}
\mathcal A&= \{(\phi, \partial_{x_1} \phi, \dots,  \partial_{x_n} \phi ):\; \phi \in \ker(\Delta+1)\},\\
\mathcal B&= \{(\phi, \partial_{x_1} \phi, \dots,  \partial_{x_n} \phi ):\; \phi \in C_c^2(\R^n)\},
\end{align*}
and write $\mathcal A_K, \mathcal B_K$ for the restrictions of $\mathcal A,\mathcal B$ to $K$.
Both  $\mathcal A_K$ and $\mathcal B_K$ are subsets of the Banach space $\oplus_{k=0}^n C(K)$, and clearly  $ \mathcal A_K\subset \overline{\mathcal B_K}^{\|\,\|_{C^0}}$. It follows that the claim in the theorem is equivalent to proving that 
\begin{equation}\label{E:goal2}
\mathcal B_K \subset  \overline{\mathcal A_K}^{\|\,\|_{C^0}} .
\end{equation}
To prove \eqref{E:goal2}, note that a distribution $D$ in the dual space $(\oplus_{k=0}^n C(K))^*$ can be identified with an $(n+1)$-tuple of measures $(\nu_0, \nu _1, \dots, \nu_n)$ with $\nu_j \in (C(K))^*$ for each $j=0, \dots n$. That is, for each $(\psi_0, \psi_1, \dots, \psi_n)\in \oplus_{j=0}^n C(K)$, 
\begin{equation}\label{E:D}
D(\psi_0, \psi_1, \dots, \psi_n)=  \sum_{j=0}^n \int_K  \psi_j \, d\nu_j.
\end{equation}
Since $\overline{\mathcal A_K}^{\|\,\|_{C^0}}=(\mathcal {A_K}^\perp )^\perp$, proving \eqref{E:goal2} is equivalent to showing that for each $D \in (\oplus_{k=0}^n C(K))^* $ satisfying
$D (\Phi ) =0 $ for all $\Phi \in \mathcal A_K,$
one has that
$D (\Phi ) =0 $ for all $\Phi \in \mathcal B_K.$ Using that each $D\in (\oplus_{k=0}^n C(K))^*$ is supported in $K$, we have reduced our problem to showing that 
\begin{align}\label{E:goal3}
\text{If}\;  D\in & (\oplus_{k=0}^n C(K))^*\; \; \text{satisfies} \;\;D (\Psi ) =0 \; \;\;\forall \Psi \in \mathcal A,\notag\\ &\qquad \qquad \text{then}\;\;D (\Phi ) =0 \; \;\;\forall \Phi \in \mathcal B.
\end{align}

We proceed to prove the claim in \eqref{E:goal3}. Fix $D\in (\oplus_{k=0}^n C(K))^*$ satisfying the assumption in  \eqref{E:goal3}.  Given  $\phi \in C^2_c(\R^n)$ we need to prove that $D(\phi, \partial_{y_1} \phi, \dots,  \partial_{y_n} \phi ) =0$.  Consider the fundamental solution \[N(x,y):= \frac{1}{n(n-2)\omega_n} \,\frac{1}{|x-y|^{n-2}},\]
where $\omega_n$ is the volume of the unit ball in $\R^n$. 
Note that there exists $C > 0$ so that
$\left|\frac{\partial N}{\partial y_j}(x,y) \right|< \frac{C}{|x-y|^{n-1}}$ for all  $j =0,\dots n.$
Therefore, for $y$ fixed, $N(x,y)$ and $\frac{\partial N}{\partial y_j}(x,y)$ are locally integrable in $\R^n$.
In particular, $N(x,y)\,|d\nu_0(y)|\, dx$ and $\frac{\partial N}{\partial y_j}(x,y)\, |d\nu_j(y)| \,dx$ are integrable on the product $K\times\R^n$, where the $\nu_j$'s are as in \eqref{E:D}.
Also, note that 
\[
\phi(y) = \int_{\R^n} (\Delta+1)\phi(x) N(x,y) dx
\quad
\text{and}
\quad
\frac{\partial \phi}{\partial y_j}(y) = \int_{\R^n} (\Delta+1)\phi (x) \frac{\partial N}{\partial y_j}(x,y) dx.
\]
By these observations, and since $K$ has measure zero, we may apply Fubini to get
\begin{align*}
&D(\phi, \partial_{y_1} \phi, \dots,  \partial_{y_n} \phi ) =\\
&=\int_K \phi(y)\, d\nu_0(y) + \sum_{j=1}^n \int_K \frac{\partial \phi}{\partial y_j}(y)\, d\nu_j(y)\\
&\!=\int_K  \int_{\R^n \backslash K} (\Delta+1)\phi(x) N(x,y) dx d\nu_0(y) + \sum_{j=1}^n \int_K\int_{\R^n \backslash  K} (\Delta+1)\phi (x) \frac{\partial N}{\partial y_j}(x,y) dxd\nu_j(y)\\
&\!= \int_{\R^n \backslash K} \int_K (\Delta+1)\phi(x) N(x,y) dx d\nu_0(y) + \sum_{j=1}^n  \int_{\R^n \backslash K} \int_K (\Delta+1)\phi (x) \frac{\partial N}{\partial y_j}(x,y) dx d\nu_j(y)\\
&= \int_{\R^n\backslash K} (\Delta+1)\phi(x) F(x) dx,
\end{align*}
where 
\[F(x):=\int_K N(x,y)\, d\nu_0(y) + \sum_{j=1}^n \int_K \frac{\partial N}{\partial y_j}(x,y)\, d\nu_j(y).\]  

The claim that $D(\phi, \partial_{y_1} \phi, \dots,  \partial_{y_n} \phi ) =0$  follows from the fact that   $F(x)=0$ for $x\in\R^3\setminus K$.  To see this,
 let  $R>0$ be large enough so that $K\subset B(0,R)$.  Then, for $x\in\R^n\backslash B(0,R)$, the map $\psi^x(y):=N(x,y)$ is in $\ker(\Delta+1)\vert_{B(0,R)}$. Applying  Proposition \ref{E: approximation}  we know that  there exists a sequence $\{\psi_\ell^x\}_{\ell} \subset \ker(\Delta+1)$ for which
\[\|\psi_\ell^x - \psi^x\|_{C^1(B(0, R))} \overset{\ell\to \infty}{\longrightarrow} 0.\] 
Hence, by the assumption in  \eqref{E:goal3},  for each $x\in\R^n\backslash B(0,R)$
\begin{align}
 0=D(\psi^x, \partial_{y_1} \psi^x, \dots,  \partial_{y_n} \psi^x)
 =\int_K N(x,y)\, d\nu_0(y) + \sum_{j=1}^n \int_K \frac{\partial N}{\partial y_j}(x,y)\, d\nu_j(y)=F(x). \label{E:F}
\end{align}
Now, the integral defining $F(x)$ converges absolutely for $x\in \R^n\setminus K$ and defines an analytic function of $x$ in this set. Since $F(x)$ vanishes for $x\in\R^n\backslash B(0,R)$, and $\R^n\setminus K$ is connected, it follows that 
\[F(x)=0 \quad \text{for all}\quad  x\in\R^n\setminus K,\]
as claimed.

\end{proof}

\subsection{Construction of the rough domains}

We will give a detailed proof Theorem \ref{main theorem R b} in $\R^3$ since in this setting it is easier to visualize how the argument works. In Section \ref{higher dimensions} we explain the modifications one needs to carry in order for the same argument to hold in $\R^n$. \\

\noindent Let $u_0:\R^3 \to \R$ be defined as \[u_0(x,y,z)=\sin (\pi x) \sin(\pi y)\sin (\pi z).\] Its nodal domains consist of a collection of cubes whose vertices lie on the grid $\Z^3$.
Throughout this note the cubes are considered to be closed sets, so faces and vertices are included.
We say that a cube is positive (resp. negative) if $u_0$ is positive (resp. negative) when restricted to it. 
We define the collection $\mathcal B^+$ of all sets $\Omega$ that are built as a finite union of cubes with the following two properties:
\begin{itemize}
\item  $\R^3 \backslash \Omega$ is connected. 
\item All the cubes in $\Omega$ that have a face in $\partial \Omega$ are positive. 
\end{itemize}
We define  $\mathcal B^-$ in the same way only that the faces in $\partial \Omega$ should belong to negative cubes.

\ \\{\bf Engulf operation.} Let $C \in \mathcal B^+$. We proceed to define the ``engulf" operation as follows. We define $E(C)$ to be the set obtained by adding to $C$ all the negative cubes that touch $C$, even if they share only one point with $C$. By construction   $E(C) \in \mathcal B^-$.  
If $C \in \mathcal B^-$, the set $E(C)$ is defined in the same form only that one adds positive cubes to $C$. In this case $E(C) \in \mathcal B^+$.
 
    \begin{center}
\begin{overpic}[height=4.5cm]{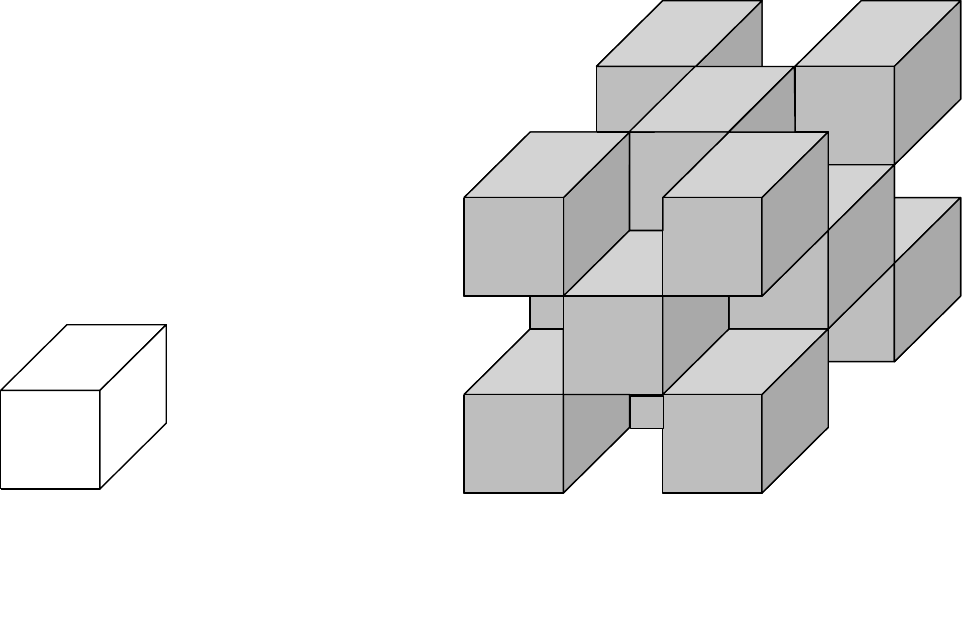}
\put(2,1){$C$ \hspace{3.5cm} $E(C)$}
\end{overpic}
\end{center}

\ \\{\bf Join operation.} Given $C \in \mathcal B^+ \cup \mathcal B^-$ we distinguish two vertices using the lexicographic order.  Namely, for any set of vertices $\Gamma \subset \Z^3$, for $i \in \{1,2,3\}$  we  set  
\[A_i^{\min}=\left\{(x_1^*, x_2^*,x_3^*) \in \Gamma: \;\; x_i^*= \min\{x_i: \; (x_1,x_2,x_3) \in \Gamma\}  \big. \right\}\subset \Z^3.\]
In the same way we define $A_i^{\max}$ replacing the minimum function above by the maximum one.
For  $C \in \mathcal B^+ \cup \mathcal B^-$, let $\Gamma_C= C \cap \Z^3$ be the set of vertices of cubes in $C$. We then set
\[ v_+(C)=A_1^{\max}(A_2^{\max}(A_3^{\max}(\Gamma_ C))) \qquad \text{and}\qquad v_-(C)=A_1^{\min}(A_2^{\min}(A_3^{\min}(\Gamma_ C))).\]
Given the vertex $v_+(C)$ we define the edge $e_+(C)$ to be the edge in $\partial C$ that has vertex $v_+(C)$ and is parallel to the $x$-axis. The edge $e_-(C)$ is defined in the same way.

We may now define the ``join" operation. Given $C_1 \in \mathcal B^+$ and $C_2 \in \mathcal B^+$ we define $J(C_1, C_2) \in \mathcal B^+$  as follows. Let $\tilde C_2$ be the translated copy of $C_2$ for which $e_+(C_1)$ coincides with $e_-(\tilde C_2)$.
We ``join" $C_1$ and $C_2$  as
\[J(C_1, C_2) = C_1 \cup \tilde C_2.\]   

In addition, for a single set $C$ we define $J(C)=C$, and if there are multiple sets $C_1, \dots, C_n$ we define \[J(C_1, \dots, C_n)=J(C_1,  J(C_2, J(C_3, \dots J(C_{n-1},C_n)))).\]

\ \\ \emph {Definition of the rough nested domains.}
Let $T_\infty:= \cup_{k=0}^\infty \mathbb N^k$. A rooted tree is characterized as a finite set of nodes $T \subset T_\infty$ satisfying that
\begin{align*}
&\bullet\; \emptyset \in T, \\
&\bullet\; (k_1, \dots, k_{\ell+1}) \in T \;\Longrightarrow\;  (k_1, \dots, k_{\ell}) \in T, \\
&\bullet\; (k_1, \dots, k_{\ell}, j) \in T \;\Longrightarrow\; (k_1, \dots, k_{\ell}, i) \in T \quad\text{for all $i \leq j$.}
\end{align*}
To shorten notation, if $v \in T$ is a node with $N$ children, we denote the children by $(v,1), \dots, (v,N)$. 

\begin{figure}[h!]
\includegraphics[height=6cm]{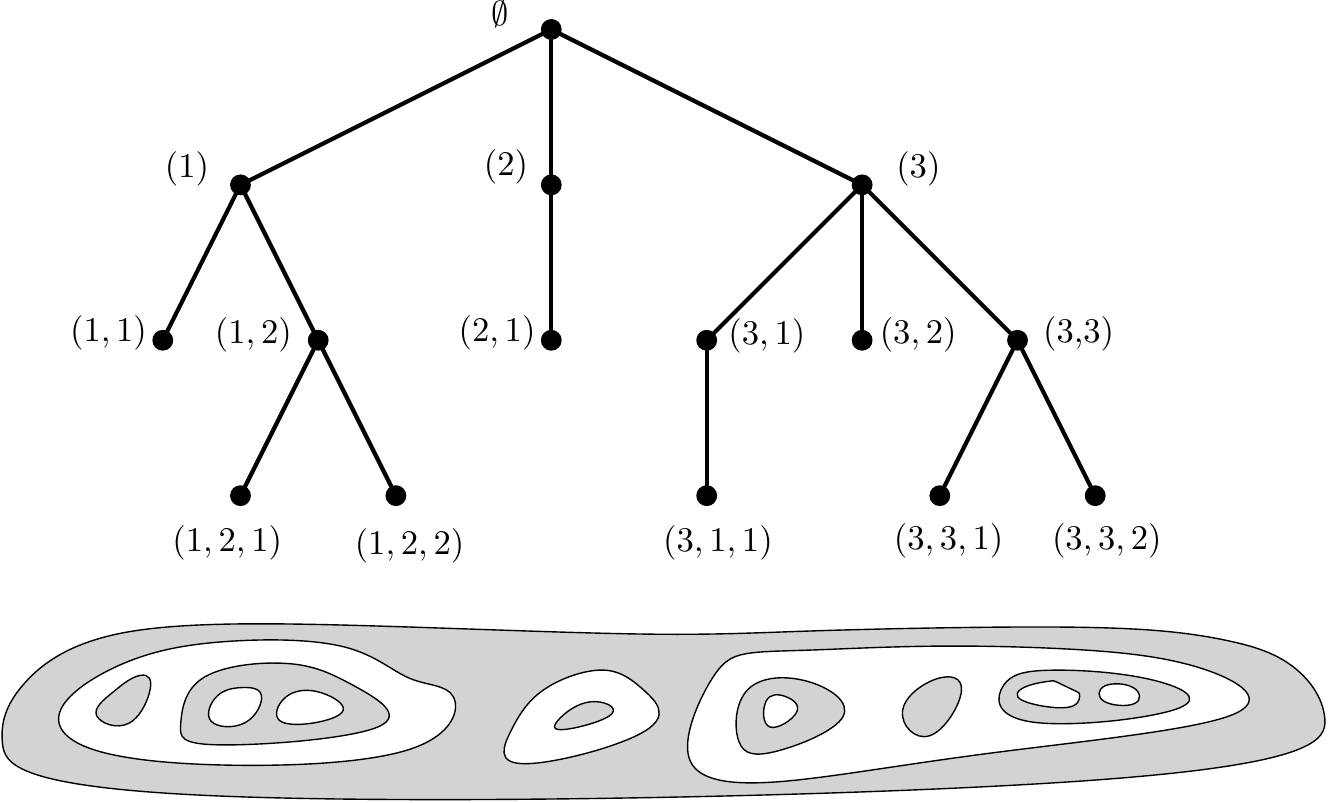}
\caption{{\small Example of a tree and  a transversal cut of the corresponding nesting of nodal domains. All the domains in figures below are labeled after this example.}}
 \end{figure}
 Given a tree $T$ we associate to each node  $v\in T$ a structure $C_{v} \subset \R^3$ defined as follows.  If the node   $v \in T$ is a leaf, then $C_{v}$ is a cube of the adequate sign. For the rest of the nodes we set
\[ C_{ v}= J\left(E( C_{ (v,1)}), \dots, E( C_{ (v,N)}) \big.\right),\]
where $N$ is the number of children of the node $v$.
It is convenient to identify the original structures $E( C_{ (v,j)})$ with the translated ones $\tilde E( C_{ (v,j)})$ that are used to build  $C_{v}$. After this identification, 
\[ C_{v}:= \bigcup_{j=1}^N  E( C_{ (v,j)}).\]
\begin{figure}[h!]
\begin{overpic}[height=5.5cm]{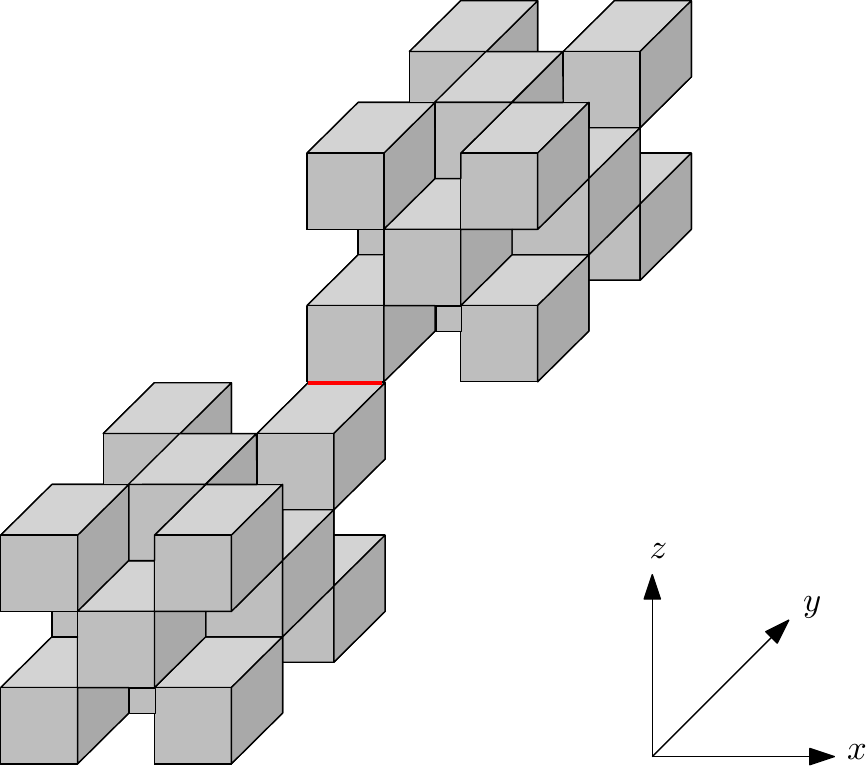}
\put(15,80){$\tilde{E}(C_1)$}
\put(-29,10){$E(C_2)$}
\end{overpic}
\caption{{\small This picture shows $J(E(C_{1}), E(C_{2}))$. The edge $e_+(E(C_{2})=e_-(\tilde E(C_{1})$ is depicted in red.   }}
\label{join}
 \end{figure}
%

\subsection{Building the perturbation}\label{S: perturbation}
Let  $v \in T$ be a node with $N$ children. We define the set of edges connected to $C_v$ on which the perturbation will be defined.\smallskip
\begin{itemize}
\item  We let  $\mathcal E_{\text{join}}(C_v)$ be the set of edges  in  $\partial C_v$ through which the structures $\{E( C_{(v,j)})\}_{j=1}^N$ are joined. We will take these edges to be open. That is, the edges in $\mathcal E_{\text{join}}(C_v)$ do not include their vertices.\\
\item We  let
$\mathcal E_{\text{ext}} (C_v)$ be the set of edges in $ \mathcal S_{\text{ext}}(C_v) $ that are not in  $\mathcal E_{\text{join}}(C_v)$. Here $\mathcal S_{\text{ext}} (C_v)$ is the surface
\begin{equation}\label{surface}
 \mathcal S_{\text{ext}} (C_v):=\{ x \in \R^3:\; d_{\text{max}} \big(x,  \cup_{j=1}^NC_{v,j} \big)=1\}.
 \end{equation}  If $v$ is a leaf, we set $\mathcal S_{\text{ext}} (C_v)=\partial C_v$.  All the edges in $\mathcal E_{\text{ext}} (C_v)$ are taken to be closed (so they include the vertices). \\
\item We let  $\mathcal E_{\text{int}}(C_v)$ be the set of edges that connect $ \mathcal S_{\text{ext}} (C_v)$ with $ \mathcal S_{\text{ext}} (C_{v,j})$ for some $j \in \{1, \dots, N\}$. If $v$ is a leaf, then we set $\mathcal E_{\text{int}}(C_v)=\emptyset$.
\end{itemize}\ \smallskip

\begin{remark} Note that if $v \in T$, and $C_v \in \mathcal B^{-}$, then $E(C_v)\backslash C_v$ is the set of positive cubes that are in the bounded component of $ \mathcal S_{\text{ext}} (C_v)$  and touch $\mathcal S_{\text{ext}}(C_v)$.  Also, if a negative cube in $\R^3 \backslash C_v$ is touching $C_v$, then it does so through an edge in $ \mathcal E_{\text{ext}} (C_v)$. \\ \smallskip
\end{remark}

 \begin{center}
\includegraphics[height=5cm]{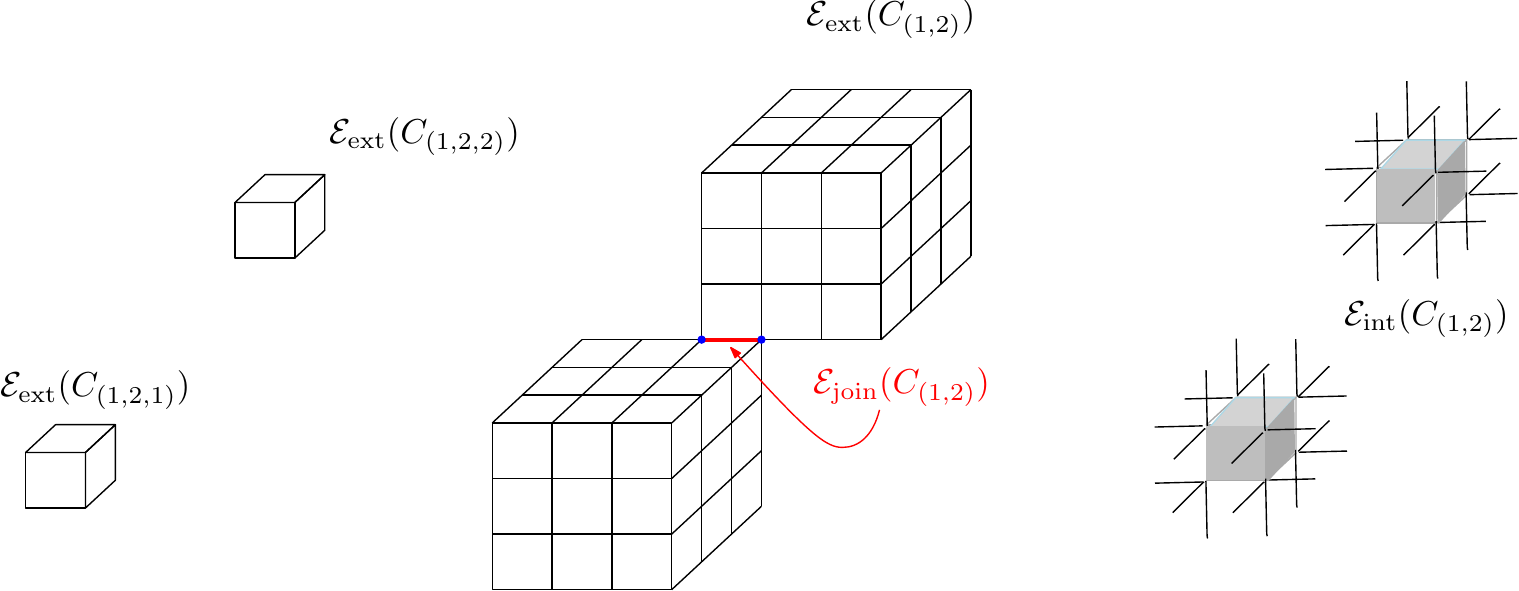}
 \end{center}\ \\

\begin{remark}  Given a node $v$ with children $\{(v,j)\}_{j=1}^N$, let $\mathcal G(C_{(v,j)})$ be the set of edges in $\{x \in \R^3:\; d(x, C_{(v,j)})=1\}$. It is clear that for each $j=1, \dots, N$ the set  $\mathcal G(C_{(v,j)})$ is connected. Also, $\mathcal E_{\text{ext}} (C_v) = \cup_{j=1}^N \mathcal G(C_{(v,j)}) \backslash \mathcal E_{\text{join}}(C_v)$. Since the  edges in $\mathcal E_{\text{join}}(C_v)$ are open, the structures  $\mathcal E_{\text{ext}} (C_v)$ are connected.
\end{remark}

\noindent  We  proceed to define a perturbation  $h:K \to \R$, where 
$$K=\bigcup_{v \in T} \mathcal E_{\text{ext}}(C_v)\cup \mathcal E_{\text{int}}(C_v)\cup   \mathcal E_{\text{join}}(C_v).$$ 
We note that by construction $K$ is formed by all the edges in $C_\emptyset$. 
Also, it is important to note that if two adjacent cubes have the same sign, then they share an edge in $K$.
The function $h$ is defined by the rules $A$, $B$ and $C$ below. \\

\begin{itemize}
\item[A)]  \emph{Perturbation on $\mathcal E_{\text{ext}} (C_v)$.} Let $v \in T$ and assume $C_v \in \mathcal B^-$. We define $h$  on every edge of  $\mathcal E_{\text{ext}} (C_v)$  to be $1$.   If  $C_v  \in \mathcal B^+$, we define $h$  on every edge of  $\mathcal E_{\text{ext}} (C_v)$  to be $-1$.   \\
\end{itemize}

\noindent Rule $A$ is meant to separate $C_v$ from all the exterior cubes of the same sign that surround it. Note that for all $v \in T$ we have $ \mathcal E_{\text{ext}} (C_v) \cap  \mathcal E_{\text{ext}} (C_{(v,j)})=\emptyset$, where $(v,j)$ is any of the children of $v$, so Rule A is well defined.\\

\begin{itemize}
\item[B)]  \emph{Perturbation on $\mathcal E_{\text{int}} (C_v)$.}  Let $e$ be an edge in $\mathcal E_{\text{int}}(C_v)$.  Then, we already know that $h$ is $1$ on one vertex and $-1$ on the other vertex. 
We extend $h$ smoothly to the entire edge $e$ so that it has a unique zero at the midpoint of $e$, and so that the absolute value of the derivative of $h$ is  $\geq 1$. We also ask for the derivative of $h$ to be $0$ at the vertices. For example,   if the edge is  $\{(a,b,z):\, z\in [0,1]\}$ where $a,b,c \in \Z$, we could take
 $h(a,b,z)=\cos(\pi z)$.  \\
\end{itemize}

\noindent Rule $B$ is enforced to ensure that no holes are added between edges that join a structure $C_v$ with any of its children structures $C_{(v,j)}$.\\

Next, assume $C_v \mathcal \in B^-$. Note that for any edge $e$ in  $\mathcal E_{\text{join}}(C_v)$ we  have that the function $h$ takes the value $1$ at their vertices, since those vertices belong to edges in $ \mathcal E_{\text{ext}}(C_v)$ and the function $h$ is defined to be $1$ on $ \mathcal E_{\text{ext}}(C_v)$. We have the same picture if  $C_v \in \mathcal B^+$, only that $h$ takes the value $-1$ on the vertices of all the joining edges. We therefore extend $h$  to be defined on $e$ as follows.\\

\begin{itemize}
\item[C)]   \emph{Perturbation on $\mathcal E_{\text{join}} (C_v)$.} Let $v \in T$ and assume $C_v \in \mathcal B^-$.  Given an edge in  $\mathcal E_{\text{join}}(C_v)$ we already know that $h$ takes the value $1$ at the vertices of the edge. We extend $h$ smoothly to the entire edge so that it takes the value $-1$ at the midpoint of the edge, and so that  it only has two roots at which the absolute value of the derivative of $h$ is  $\geq 1$. 
We further ask $h$ to have zero derivative at the endpoints of the edge. For example, if the edge is  $\{(a,b,z):\, z\in [c,c+1]\}$ where $a,b,c \in \Z$, we could take
 $h(a,b,z)=\cos(2\pi z)$.  In the case in which $C_v \in \mathcal B^+$  we  need $h$ to take the value $+1$ at the midpoint of the edge.\\
\end{itemize}

\noindent Rule $C$ is meant to glue the structures $\{ E (C_{(v,j)})\}_{j=1}^N$ through the middle point of the edges that join them, without generating new holes.\\

\begin{remark}\label{extension}
By construction the function $h$ is smooth in the interior of each edge. Furthermore, since we ask the derivative of $h$ to vanish at the vertices in $K$, the function $h$ can be extended to a function $h \in C^1(\Omega)$ where $\Omega \subset \R^3$ is an open neighborhood of $K$.
\end{remark}

 \begin{definition}\label{perturbation}
 Given a tree $T$,  let $h \in C^1(\Omega)$ be defined following Rules A, B and C and Remark \ref{extension}, where $\Omega \subset \R^3$ is an open neighborhood of $K$.  Since $K$ is compact and $\R^3 \backslash K$ is connected,  Theorem \ref{T:perturbation} gives the existence of  $f: \R^3 \to \R$  that satisfies 
 \[-\Delta f=f \quad \text{and} \quad  \sup_{ K}\{|f-h|+ \|\nabla f-\nabla h\|\} \leq \tfrac{1}{100}.\] 
For $\ep>0$ small set 
 \[u_\ep:= u_0 +\ep f.\]  
 \end{definition}
 We will show in Lemma \ref{limit} that the perturbation was built so that the nodal domain of $u_\ep$ corresponding to $v \in T$ is constituted by the deformed cubes in  $\bigcup_{j=1}^N E(C_{(v,j)} ) \backslash C_{(v,j)}$ after the perturbation is performed.

We illustrate how Rules A, B, and C work in the following examples.  In what follows we shall use repeatedly that the singularities of the zero set of $u_0$ are on the edges and vertices of the cubes. Therefore, the changes of topology in the zero set can only occur after perturbing the function $u_0$ along the edges and vertices of the cubes. \\

\noindent {\bf Example 1.} As an example of how  Rules A and B work, we explain how to create a domain that contains another nodal domain inside of it.  The tree corresponding to this picture is given by two nodes, $1$ and $(1,1)$, that are joined by an edge. 
We start with  a positive cube $C_{(1,1)} \in \mathcal B^+$ and work with its engulfment $C_1=E(C_{(1,1)}) \in \mathcal B^-$. All the edges of $C_{(1,1)}$ belong to  $\mathcal E_{\text{ext}} (C_{(1,1)})$. Therefore, the function $u_\ep$ takes the value $-\ep$ on $\mathcal E_{\text{ext}} (C_{(1,1)})$. Also,  all the positive cubes that touch $C_{(1,1)}$ do so through an edge in $\mathcal E_{\text{ext}} (C_{(1,1)})$. It follows that all the positive cubes surrounding $C_{(1,1)}$ are disconnected from $C_{(1,1)}$  after the perturbation is performed.  The cube $C_{(1,1)}$ then becomes a positive nodal domain $\Omega_{(1,1)}$ of $u_\ep$ that is contractible to a point.

 \begin{center}
\includegraphics[height=4.5cm]{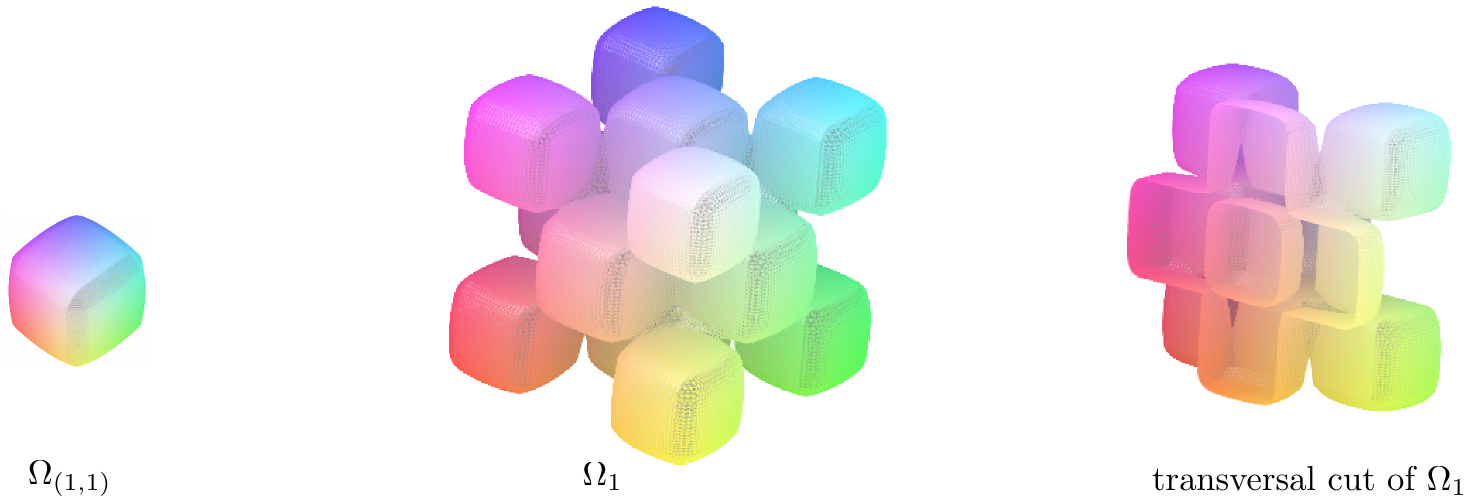}
\end{center}
 Next, note that all the negative cubes that touch $C_{(1,1)}$  (i.e., cubes in $E(C_{(1,1)}) \backslash C_{(1,1)}$) do so through a face whose edges are in   $\mathcal E_{\text{ext}} (C_{(1,1)})$, or through a vertex that also belongs to one of the edges in  $\mathcal E_{\text{ext}} (C_{(1,1)})$. Therefore, all the negative cubes are glued together after the perturbation is performed, and belong to a nodal domain $\Omega_1$ that contains the connected set  $\mathcal E_{\text{ext}} (C_{(1,1)})$.

So far we have seen that $\Omega_1$ contains  the perturbation of  the cubes in $E(C_{(1,1)}) \backslash C_{(1,1)}$. We claim that no other cubes are added to $\Omega_1$. Indeed, all the negative cubes that touch the boundary of  $E(C_{(1,1)}) =C_1$ do so through edges in  $\mathcal E_{\text{ext}} (C_{1})$. Then, since $u_\ep$ takes the value $\ep$ on  $\mathcal E_{\text{ext}} (C_{1})$, all the surrounding negative cubes are disconnected from $E(C_{(1,1)})$ after we apply the perturbation. Since along the edges connecting $\partial C_{(1,1)}$ with $\partial C_{1}$ the function $u_\ep$ has only one sign change (it goes from $-\ep$ to $\ep$) it is clear that $\Omega_1$ can be retracted to $\partial \Omega_{(1,1)}$.\\

\noindent {\bf Example 2.} Here we explain how Rule C works. Suppose we want to create a nodal domain that contains two disjoint nodal domains inside of it. The tree corresponding to this picture is given by three nodes, $1$, $(1,1)$, and $(1,2)$. The node $1$ is joined by an edge to  $(1,1)$ and by another edge to $(1,2)$.  Assume that $C_{(1,1)}$ and $C_{(1,2)}$ belong to $\mathcal B^+$.
Then, $C_1=E(C_{(1,1)}) \cup E(C_{(1,2)}) \in \mathcal B^-$.  When each of the structures $E(C_{(1,1)}) $ or $E(C_{(1,1)}) $ are perturbed, we get a copy of the negative nodal domain in Example 1. Since in $C_1$ the structures $E(C_{(1,1)}) $ and  $E(C_{(1,1)}) $  are joined by an edge, the two copies of $\Omega_1$ will also be glued. The reason for this is that the function $u_\ep$ takes the value $-\ep$ in the middle point of the edge joining $E(C_{(1,1)}) $ and  $E(C_{(1,1)}) $. Therefore,  a small negative tube connects both structures.

 \begin{center}
\includegraphics[height=5.5cm]{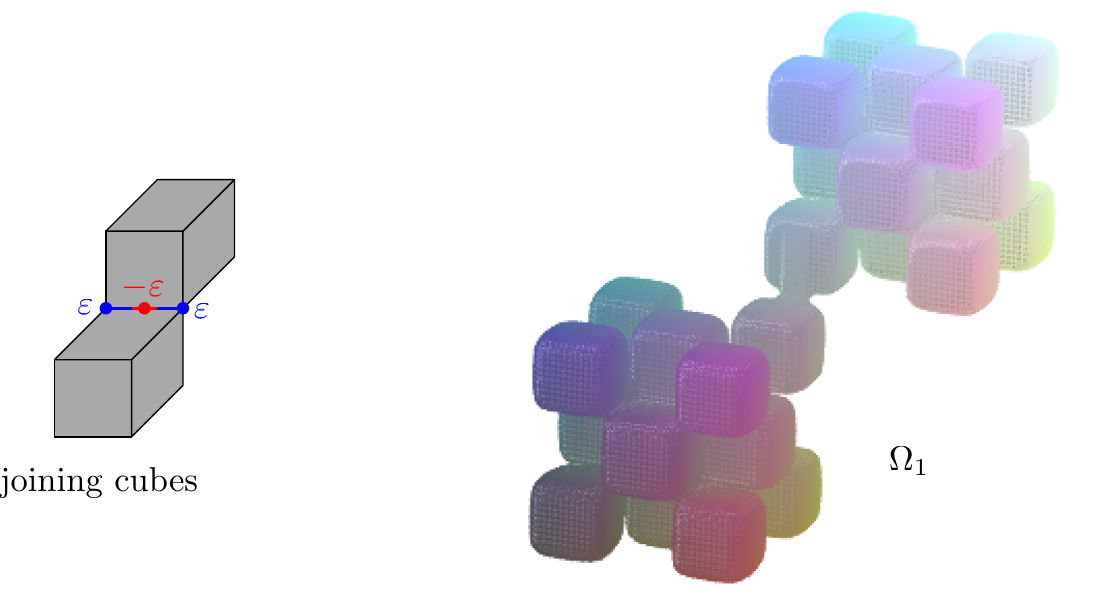}
 \end{center}

\subsection{Local behavior of the zero set}\label{local}
In this section we explain what our perturbation does to the zero set of $u_0$ at a local level. Given a tree $T$, and $\ep>0$,  let \[u_\ep = u_0 + \ep f\] be defined as in Definition \ref{perturbation}.
Using that $f$ is a continuous function, and that we are working on a compact region of $\R^n$ (we call it $D$), it is easy to see that   there exists a  $\delta_0>0$, so that if $\mathcal T_\delta$ is the $\delta$-tubular neighborhood  of $K$, then  $u_\ep$ has no zeros in $\mathcal T_\delta^c \cap C_\emptyset$ as long as $\delta \leq \delta_0$ and  
$$\ep =c_1 \delta^2,$$ where $c_1$ is some positive constant that depends only on $\|f\|_{C^0(D)}$. This follows after noticing that $|u_0|$ takes the value $1$ at the center of each cube and decreases radially until it takes the value $0$ on the boundary of the cube.   

The construction of the tubular neighborhood $\mathcal T_\delta$ yields that in order to understand the behavior of the zero set of $u_\ep$ we may restrict ourselves to study it inside $\mathcal T_\delta$ for $\delta \leq \delta_0$.
We proceed to study the zero set of $u_\ep$  in a $\delta$-tubular neighborhood of each edge in $K$. Assume, without loss of generality, that the edge is the set of points  $\{(0,0,z):\; z\in [0,1]\}$. \\

\noindent{\bf Vertices.} 
At the vertex $(0,0,0)$  the function $h$ takes the value $1$ or $-1$.  Assume $h(0,0,0)=-1$ (the study when the value is $1$ is identical).  In this case, we claim that the zero set of $u_\ep(x,y,z)$ near the vertex is diffeomorphic to that of the function  $\ell_\ep (x,y,z):=u_0(x,y,z)-\ep$ provided $\delta$ (and hence $\ep=\ep(\delta)$) is small enough.  To see this, for  $\eta>0$  set $V_\eta$ to be one of the connected components of $u_\ep^{-1}(B(0, \eta))$ intersected with $\mathcal T_\delta$. 

We apply the version of Thom's Isotopy Theorem given in [EP, Theorem 3.1] which asserts  that for every smooth function $\ell$ satisfying 
 \begin{equation}\label{RHS}
 \|u_\ep -\ell \|_{C^1(V_\eta)}\leq  \min\big\{\eta/4, \; 1\;, \inf_{V_\eta} \|\nabla u_\ep\| \big \}
 \end{equation}
   there exists a diffeomorphism $\Phi:\R^3 \to \R^3$ making 
$$\Phi(u_\ep^{-1}(0) \cap V_\eta) = \ell^{-1}(0) \cap V_\eta.$$ 
    We observe that the statement of  [EP, Theorem 3.1]  gives the existence of an $\alpha>0$ so that   the diffeomorphism can be built provided $\|\ell_\ep-u\|_{C^1(V_\eta)}\leq \alpha$. However, it can be tracked from the proof that $\alpha$ can be chosen to be as in the RHS of \eqref{RHS}. 
    
  Applying [EP, Theorem 3.1]  to the function $\ell_\ep$ we obtain what we claim provided we can verify \eqref{RHS}.  
  First, note that $\|u_\ep-\ell_\ep\|_{C^1(V_\eta)}=\|\ep(f-1)\|_{C^1(V_\eta)}$. It  is then easy to check that 
  \begin{equation}\label{bound on diff}
  \|u_\ep-\ell_\ep\|_{C^1(V_\eta)}\leq c_2 \ep
  \end{equation}
 for some $c_2>0$ depending only on $\|\nabla f\|_{C^0(D)}$.
Next, we find a lower bound for the gradient of $u_\ep$ when restricted to the zero set $u_\ep^{-1}(0)$. Note that for $(x,y,z) \in \mathcal T_\delta \cap u_\ep^{-1}(0)$ we have
\begin{align}
\|\nabla u_\ep (x,y,z)\|
&= \ep \Big  \|  -\pi  f(x,y,z) \big( \cot(\pi x), \cot (\pi y), \cot(\pi z) \big)    + \nabla f(x,y,z) \Big \|   \label{gradient}   \\
& \geq \ep \Big(\sqrt{ \frac{1}{x^2}+\frac{1}{y^2}+\frac{1}{z^2}}- \|\nabla f(x,y,z)\|\Big) + O(\ep \delta) \notag\\
& \geq \ep \Big(\frac{1}{\delta}- \|\nabla f(x,y,z)\|\Big) + O(\ep \delta).\notag
\end{align}
On the other hand, since $ \|\nabla \langle \text{Hess} \,u_\ep (x,y,z) , (x,y,z) \rangle\|=O(\eta)$ for all $(x,y,z) \in  V_\eta$, we conclude
\begin{equation}\label{bound on grad}
\inf_{V_\eta} \|\nabla u_\ep \| > \ep\Big(\frac{1}{\delta}- \|\nabla f(x,y,z)\|\Big) + O(\ep \delta)+O(\eta)
\end{equation}
 whenever $\delta$ is small enough. 

Using the bounds in \eqref{bound on diff} and \eqref{bound on grad} it is immediate to check that  \eqref{RHS} holds  provided we choose 
 $\eta =  c_3 \ep$ for a constant  $c_3>0$  depending only on $f$, and for $\delta$ small enough.

In the image below the first figure shows the zero set of $u_0$ near $0$. The other two figures are of the zero set of $\ell_\ep(x,y,z)$.
\begin{center}
\includegraphics[height=4cm]{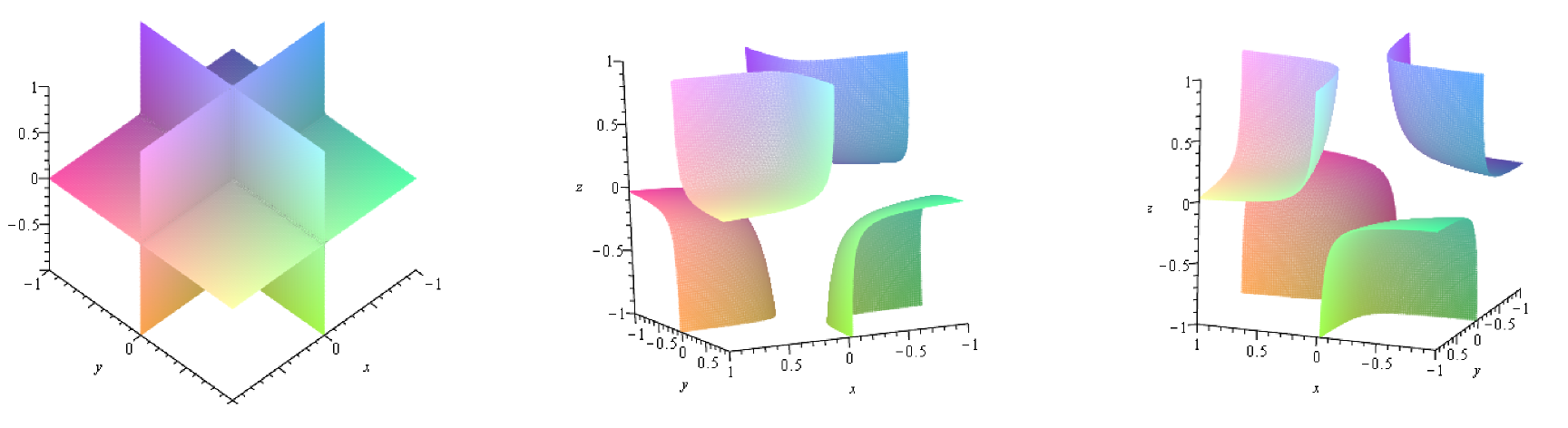}
\end{center}
This shows that at each vertex where $h$ takes the value $-1$ the negative cubes that touch the vertex are glued together while the positive ones are disconnected.\\

\noindent{\bf Edges.} Having dealt with the vertices we move to describe the zero set of the perturbation near a point inside the edge. 
There are three cases. In the first case (case A) the perturbation $h$ is strictly positive (approx. $\ep$) or strictly negative (approx $-\ep$) along the edge. In the second case (case B) the perturbation $f$ is strictly positive (approx. $\ep$)  at one vertex and strictly negative (approx. $-\ep$) at the other vertex. In the third case (case C), the edge is joining two adjacent structures so the perturbation $f$ takes the same sign at the vertices ( it is approx. $\ep$) and the opposite sign  (it is approx. $\ep$) at the midpoint of the edge having only two zeros along the edge.\\

In {\bf case A} the zero set of $u_\ep(x,y,z)$  near the edge is diffeomorphic to the zero set of the map $\ell_\ep(x,y,z):= u_0(x,y,z)-\ep$. The proof of this claim is the same as  the one given near the vertices, so we omit it. In the picture below the first figure shows the zero set of $u_0$ near the edge while the second figure shows the zero set of $\ell_\ep$. 
   \begin{center}
\includegraphics[height=4cm]{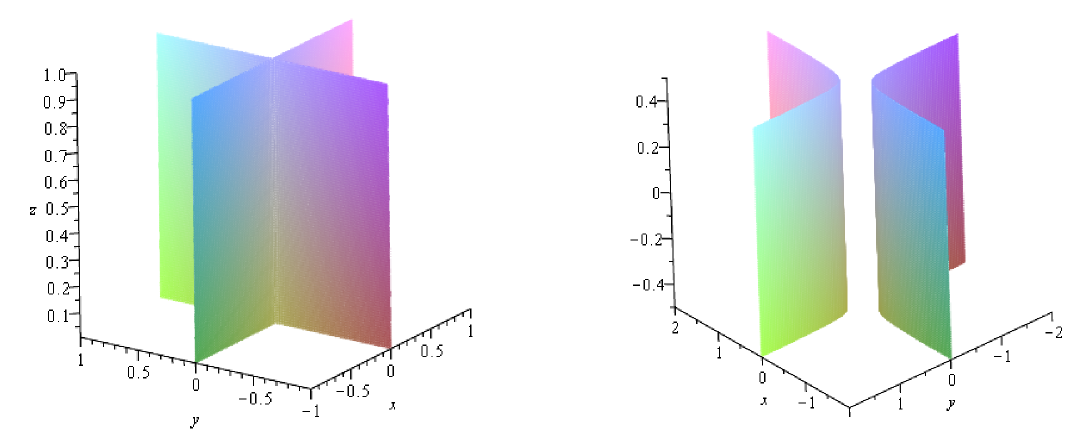}
 \end{center}
 This shows that two cubes of the same sign, say negative, that are connected through an edge are going to be either glued if the perturbation takes the value $-1$ along the edge, or disconnected if the perturbation takes the value $+1$ along the edge.\\

In {\bf case B}, it is clear that the only interesting new behavior will occur near the points on the edge at which the function $f$ vanishes.  Since $\|h-f\|_{C^1(\Omega)}< \frac{1}{100}$ and $h(0,0,b)=0$, there is only one point at which $f$ vanishes; say the point is $(0,0,b)$. Note that $f$ was built so that $(0,0,b)$ is the only zero of $f$ along the edge. We claim that the  zero set of $u_\ep$ near $(0,0,b)$  is diffeomorphic to the zero set of the map $\ell_\ep(x,y,z):=u_0(x,y,z)-\ep f(0,0,z)$.  The proof of this claim is similar to the one given near the vertices, so we omit it.  The only  relevant difference is that in order to bound $\|\nabla u_\ep\|$ from below, one uses that $\|\nabla u_\ep (x,y,z)\| \geq \|\ep \nabla f (x,y,z)\|- \|\nabla u_0 (x,y,z)\|$, and that $\|\nabla u_0 (x,y,z)\| =O(\beta)$ in a ball of radius $\beta$ centered at $(0,0,b)$ while  $\|\nabla f(0,0, b)\| > 1-\tfrac{1}{100}$. Of course, if one is away from the value $z=b$, then the analysis is the same as that of case A. The first figure in the picture below shows the zero set of $u_0$ along the edge while the second figure shows the zero set of $\ell_\ep$ when  $f(0,0,z)=\cos(\pi z)$.
  \begin{center}
\includegraphics[height=3.7cm]{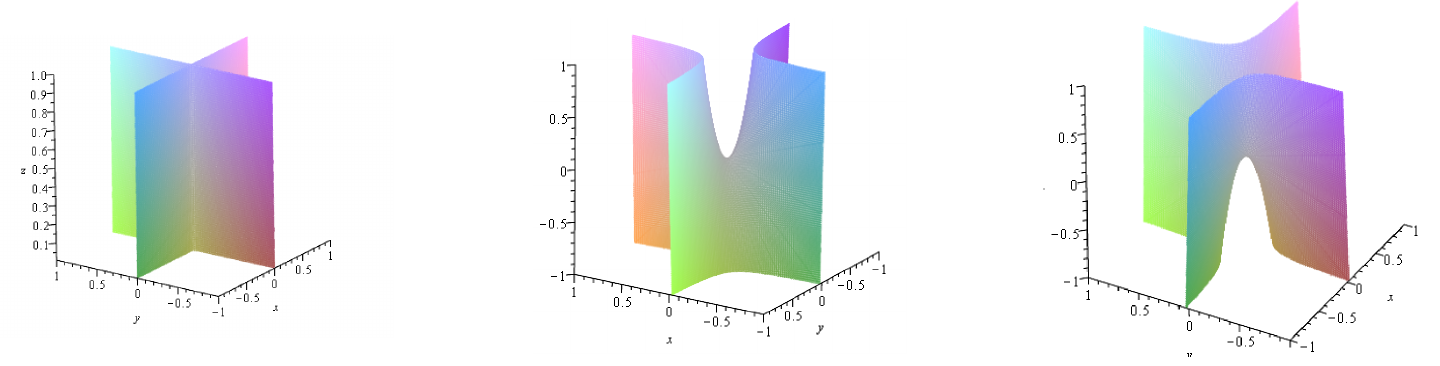}
 \end{center}
 This shows that two consecutive cubes sharing an edge along which the perturbation changes sign will be glued on one half of the edge and disconnected along the other half.\\
 
 In {\bf case C}, the zero set of $u_\ep$ is diffeomorphic to that of $\ell_\ep(x,y,z)=u_0(x,y,z)+\ep f(0,0, z)$
 where $f$ satisfies $\|h-f\|_{C^1(\Omega)}< \frac{1}{100}$ and $h(0,0,0)=h(0,0,1)=1$ and $h(0,0,1/2)=-1$.
  The zero set of $\ell_\ep$ when $ f(0,0,z)=\cos(2 \pi z)$ is plotted in the figure below.
  \begin{center}
\includegraphics[height=3cm]{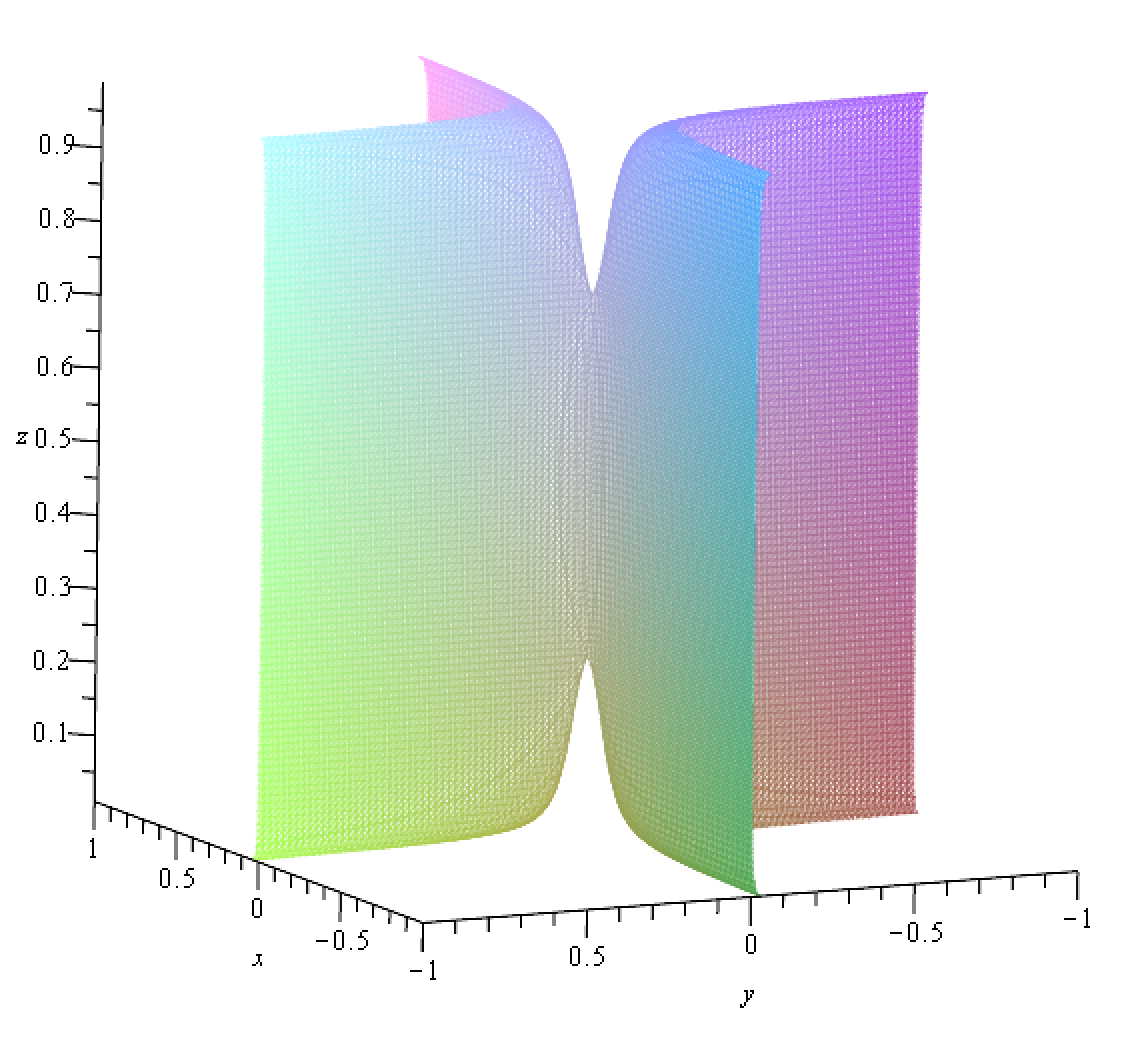}\quad\qquad\quad
\includegraphics[height=3cm]{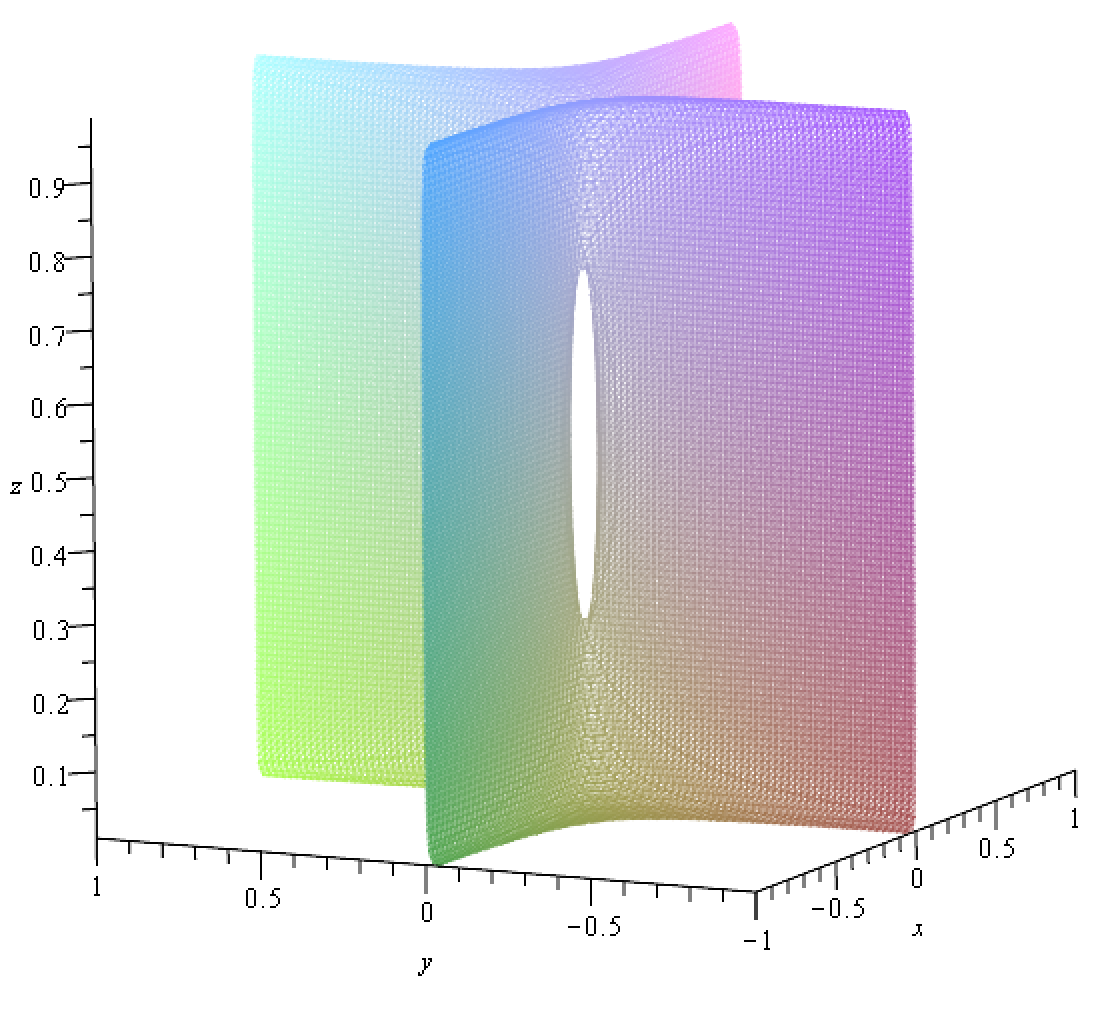}\quad\qquad\quad
\includegraphics[height=3cm]{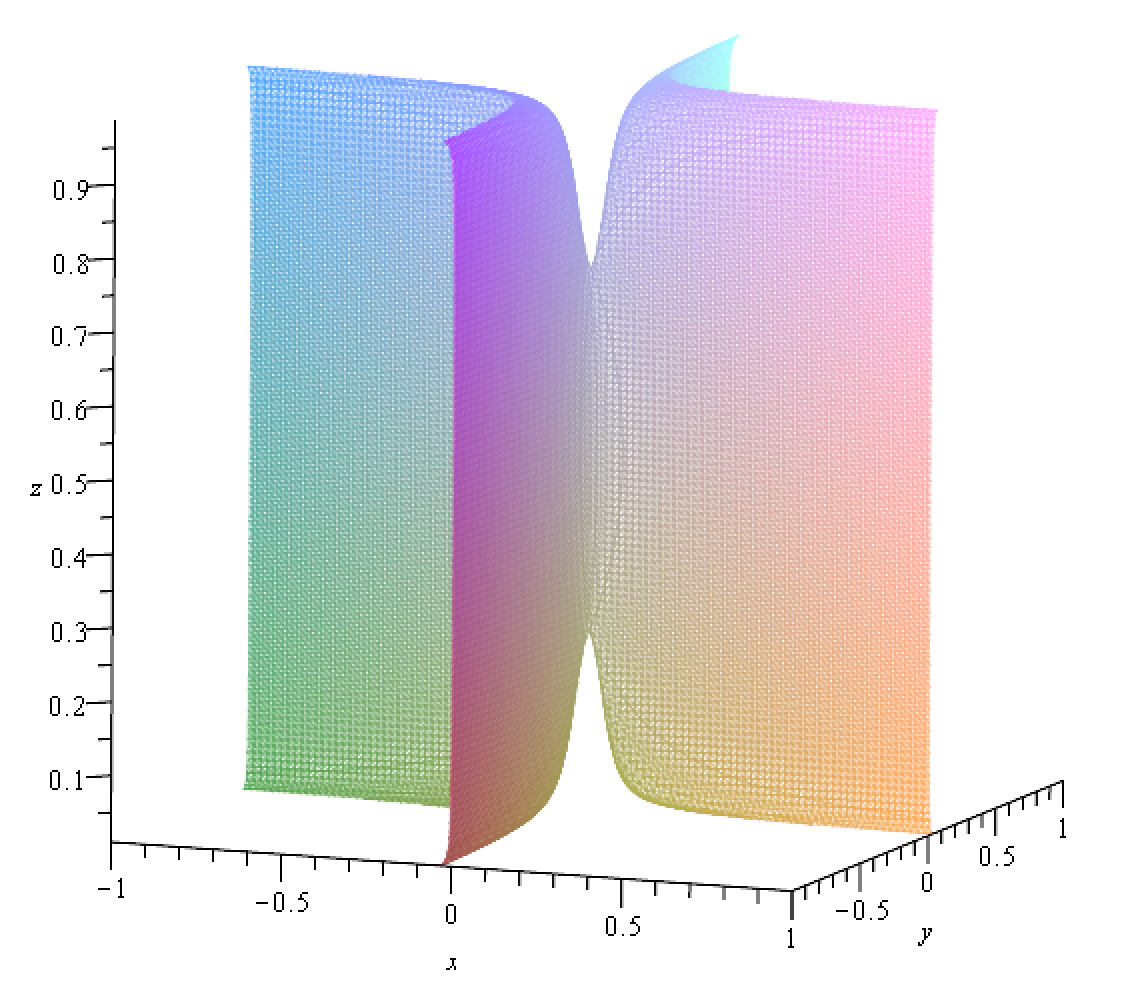}
 \end{center}
 This shows that two cubes that are joining two consecutive structures will be glued though the midpoint while being disconnected at the vertices.
 
\subsection{Definition of the nodal domains}
Given a tree $T$ and $\ep>0$  we continue to work with 
 \begin{equation*}
 u_\ep= u_0 +\ep f,
 \end{equation*} 
 as defined in Definition \ref{perturbation}.
 Fix $v \in T$, and suppose it has $N$ children. Assume without loss of generality that $C_v \in \mathcal B^+$.  For every $j \in \{1, \dots, n\}$ the perturbed function  $u_\ep$ takes the value  $\ep$ on $\mathcal E_{\text{ext}} (C_{(v,j)})$, and $\mathcal E_{\text{ext}} (C_{(v,j)})$ is connected.  It follows that for each $j \in\{1, \dots, N\}$ there exists a positive nodal domain $\mathcal N_{(v,j)}$  of $u_\ep$ that contains  $\mathcal E_{\text{ext}} (C_{(v,j)})$. We define the set $\Omega_{v}=\Omega_{v}(\ep)$ as 
 \begin{equation}\label{domain}
 \Omega_{v}:=\bigcup_{j=1}^N \mathcal N_{(v,j)}.
 \end{equation}
 Throughout this section we  use  the description of the local behavior of $u_\ep^{-1}(0)$ that we gave in Section \ref{local}.
In the following lemma we prove that  $\Omega_v$ is a nodal domain of $u_\ep$.

\begin{lemma}\label{nodal d}
Let $T$ be a tree and for each  $\ep>0$ let $u_\ep$ be the perturbation defined in \eqref{perturbation}.  Then, for each $\ep>0$ and $v \in T$, the set $\Omega_v=\Omega_v(\ep)$  defined in \eqref{domain} is a nodal domain of $u_\ep$.
\end{lemma}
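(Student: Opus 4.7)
The plan is to reduce the statement to showing that all the $\mathcal N_{(v,j)}$ coincide as nodal domains of $u_\ep$. By definition each $\mathcal N_{(v,j)}$ is already a single connected component of $\{u_\ep>0\}$ containing the connected set $\mathcal E_{\text{ext}}(C_{(v,j)})$, so proving that $\Omega_v=\bigcup_{j=1}^N \mathcal N_{(v,j)}$ is a nodal domain is equivalent to proving $\mathcal N_{(v,1)}=\mathcal N_{(v,2)}=\cdots=\mathcal N_{(v,N)}$. Since $C_v$ is obtained by applying $J$ sequentially to the engulfments $E(C_{(v,j)})$, by transitivity it suffices to prove $\mathcal N_{(v,j)}=\mathcal N_{(v,j+1)}$ for each consecutive pair.

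Keeping the working assumption $C_v\in\mathcal B^+$, so that $C_{(v,j)}\in\mathcal B^-$ and $E(C_{(v,j)})\in\mathcal B^+$ for every $j$, fix such a consecutive pair of children; their engulfments are glued along a single edge $e=e_+(E(C_{(v,j)}))=e_-(\tilde E(C_{(v,j+1)}))\in\mathcal E_{\text{join}}(C_v)$. A direct inspection---using that the endpoints of $e$ are the lexicographically extremal vertices of $E(C_{(v,j)})$ and $\tilde E(C_{(v,j+1)})$---shows that exactly one unit cube $P_j\subset E(C_{(v,j)})$ and one unit cube $P_{j+1}\subset\tilde E(C_{(v,j+1)})$ are incident to $e$, and that they occupy opposite corners among the four unit cubes surrounding $e$. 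Both of these cubes carry at least one face on the outer boundary of the corresponding $\mathcal B^+$-structure, so by definition of $\mathcal B^+$ the function $u_0$ is positive on both $P_j$ and $P_{j+1}$.

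By Rule A applied to $C_v\in\mathcal B^+$, the perturbation satisfies $h=-1$ at the two vertices of $e$ (both lying on $\mathcal E_{\text{ext}}(C_v)$), and by Rule C in the same case $h=+1$ at the midpoint of $e$. The case C local analysis of Section \ref{local} in a small neighborhood of $e$ then identifies $u_\ep^{-1}(0)$ near $e$ with the zero set of $u_0+\ep f(0,0,z)$ for a $\cos(2\pi z)$-type profile, and inspection of this local model reveals that the two positive cubes at opposite diagonal corners across $e$, namely $P_j$ and $P_{j+1}$, are connected by a narrow positive tube passing close to the midpoint of $e$. This produces a continuous path inside $\{u_\ep>0\}$ joining interior points of $P_j$ and $P_{j+1}$. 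Combined with the case A local analysis on the edges of $\mathcal E_{\text{ext}}(C_{(v,j)})$, which (because $h=+1$ there and that edge set is connected) glues all the positive cubes of the layer $E(C_{(v,j)})\setminus C_{(v,j)}$ into $\mathcal N_{(v,j)}$, this gives $P_j\subset\mathcal N_{(v,j)}$ and analogously $P_{j+1}\subset\mathcal N_{(v,j+1)}$. The tube through the midpoint of $e$ therefore forces $\mathcal N_{(v,j)}=\mathcal N_{(v,j+1)}$, closing the induction.

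The most delicate point will be the identification of $P_j$ and $P_{j+1}$ among the four cubes incident to $e$, together with the verification that the local model of case C glues \emph{precisely} this pair of positive cubes rather than the opposite diagonal pair of negative cubes. This requires careful bookkeeping of the chess-board parity of $u_0$ with respect to the lexicographic orientation in the definition of $e_\pm$, combined with the fact that the engulf operation $E$ only adjoins cubes of the opposite sign to its argument.
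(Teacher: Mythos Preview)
Your argument is correct and follows essentially the same route as the paper's proof: reduce to showing $\mathcal N_{(v,j)}=\mathcal N_{(v,j+1)}$ for consecutive $j$, locate the two cubes of the correct sign in $E(C_{(v,j)})$ and $\tilde E(C_{(v,j+1)})$ that share the joining edge $e$, use Rule~C to see that $u_\ep$ has the right sign at the midpoint of $e$, and conclude via a path through that midpoint that links the two external grids $\mathcal E_{\text{ext}}(C_{(v,j)})$ and $\mathcal E_{\text{ext}}(C_{(v,j+1)})$. The only cosmetic differences are your choice of the sign convention $C_v\in\mathcal B^+$ (the paper takes $C_v\in\mathcal B^-$) and your appeal to the local model of case~C rather than directly exhibiting the curve $\gamma_j$; neither changes the substance.
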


\begin{proof} 
Let $v \in T$ and suppose $v$ has $N$ children. Assume without loss of generality that $C_v \in \mathcal B^-$. By definition, $\Omega_{v}=\cup_{j=1}^N \mathcal N_{(v,j)}$ where  $\mathcal N_{(v,j)}$  is the nodal domain of $u_\ep$ that contains  $\mathcal E_{\text{ext}} (C_{(v,j)})$. To prove that $\Omega_v$ is itself a nodal domain, we shall show that $\mathcal N_{(v,j)}=\mathcal N_{(v,j+1)}$ for all $j \in \{1, \dots, N-1\}$.

Fix $j \in \{1, \dots, N-1\}$. The structures $E(C_{(v,j)})$ and $E(C_{(v,j+1)})$ are joined through an edge $e_j$ in $\mathcal E_{\text{join}}(C_v)$. If we name the middle point of $e_j$ as $m_j$,  then by Rule C we have  $u_\ep(m_j)=\ep f(m_j)<0$. 

The edge $e_j$ is shared by a cube $c_j \in E(C_{(v,j)})$ and a cube $c_{j+1} \in E(C_{(v,j+1)})$.
Note that every cube in $E(C_{(v,j)})$ has at least one vertex that belongs to an edge in $\mathcal E_{\text{ext}} (C_{(v,j)})$ (same with  $E(C_{(v,j+1)})$). Let  $p_j$ be a vertex of  $c_j$ that belongs to an edge in  $\mathcal E_{\text{ext}} (C_{(v,j)})$. In the same way we choose  $q_{j}$ to be a vertex in $ c_{j+1}$ that belongs to an edge in  $\mathcal E_{\text{ext}} (C_{(v,j+1)})$. In particular, by Rule~A we have that  $u_\ep(p_j)<0$ and $u_\ep(q_j)<0$.

 \begin{figure}[h!]
\includegraphics[height=6cm]{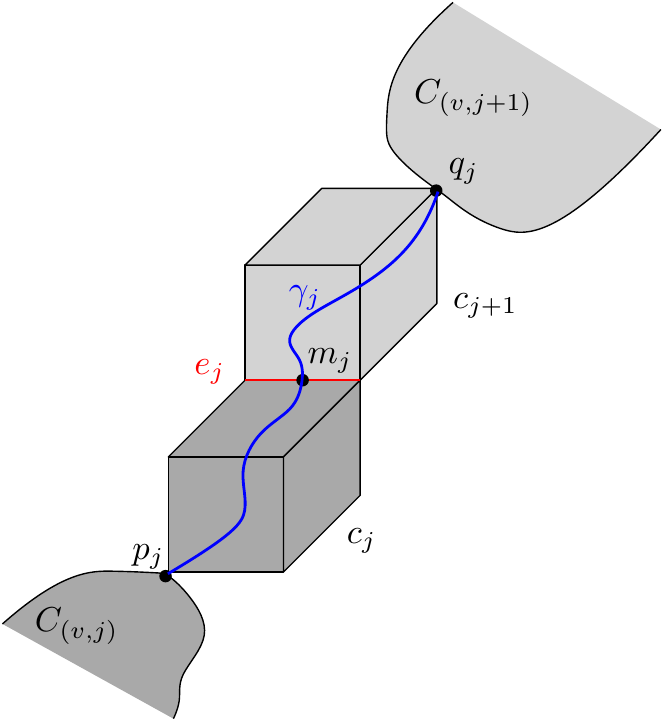}
\caption{}
\label{figure join}
 \end{figure}
Since both $c_j$ and $c_{j+1}$ are negative cubes, there exists a curve $\gamma_j \subset u_\ep^{-1}((-\infty,0))$ that joins $p_j $ with $q_j$  while passing through the middle point $m_j$. 

Finally, since $p_j \in \mathcal E_{\text{ext}} (C_{(v,j)}) \subset  \mathcal N_{(v,j)}$,  $q_j \in \mathcal E_{\text{ext}} (C_{(v,j+1)}) \subset  \mathcal N_{(v,j+1)} $, and $\gamma_j$ is a connected subset of $u_\ep^{-1}((-\infty, 0))$, we must have that $\mathcal N_{(v,j)}=\mathcal N_{(v,j+1)}$ as claimed.

\end{proof}
 
In the following lemma we describe the set of cubes that end up building a nodal domain after the perturbation is performed.

\begin{lemma}\label{limit}
Let $T$ be a tree and for each  $\ep>0$ let $u_\ep$ be the perturbation defined in \eqref{perturbation}.  For each  $v \in T$ with $N$ children we have 
\[\lim_{\ep \to 0} \Omega_v(\ep)=  \bigcup_{j=1}^N E(C_{(v,j)} ) \backslash C_{(v,j)}.\]
\end{lemma}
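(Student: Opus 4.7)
The plan is to establish the Hausdorff set convergence $\Omega_v(\ep) \to D_v := \bigcup_{j=1}^N E(C_{(v,j)}) \setminus C_{(v,j)}$ by checking that for all $\ep$ sufficiently small, $\Omega_v(\ep)$, up to a $\delta$-shrinking of each cube, consists of exactly the cubes in $D_v$. Assume without loss of generality $C_v \in \mathcal B^-$, so each child $C_{(v,j)}$ lies in $\mathcal B^+$ and every cube of $D_v$ is negative. By the parameter choice $\ep = c_1 \delta^2$ of Section \ref{local}, the zero set $u_\ep^{-1}(0)$ is confined to the $\delta$-tubular neighborhood $\mathcal T_\delta(K)$ and $\mathrm{sgn}\, u_\ep = \mathrm{sgn}\, u_0$ outside this neighborhood. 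Since $u_\ep \approx -\ep$ on $\mathcal E_{\text{ext}}(C_{(v,j)})$ by Rule A, $\Omega_v \subset \{u_\ep < 0\}$, and so only negative cubes of $u_0$ can contribute to $\Omega_v$.

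For the inclusion $D_v \subseteq \Omega_v(\ep)$ (up to $\delta$-shrinking), fix $c \in E(C_{(v,j)}) \setminus C_{(v,j)}$. By the engulfment definition, $c$ meets $C_{(v,j)}$ at at least one vertex $p \in \Z^3 \cap \partial C_{(v,j)}$, and $p$ is an endpoint of edges in $\mathcal E_{\text{ext}}(C_{(v,j)})$ where Rule A assigns $h = -1$. The vertex analysis of Section \ref{local} (with $h(p) = -1$) shows that the four negative cubes meeting at $p$ are glued within $\{u_\ep < 0\}$; the edge case A with $h = -1$ gives the same conclusion when $c$ shares an entire edge with a neighbor in $\mathcal E_{\text{ext}}(C_{(v,j)})$. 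Iterating along the connected set $\mathcal E_{\text{ext}}(C_{(v,j)})$ (connectedness noted in Section \ref{S: perturbation}) yields $E(C_{(v,j)}) \setminus C_{(v,j)} \subseteq \mathcal N_{(v,j)}$, and Lemma \ref{nodal d} identifies $\mathcal N_{(v,j)} = \Omega_v$ for every $j$.

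For the reverse inclusion $\Omega_v(\ep) \subseteq D_v$ (up to $\delta$-thickening), I would argue by contradiction: suppose a negative cube $c \subseteq \Omega_v$ lies outside $D_v$. The recursive construction assigns every negative cube of $C_\emptyset$ to a unique shell $E(C_w) \setminus C_w$ for some $w \in T \setminus \{\emptyset\}$; the case to rule out is $w$ not a child of $v$. If $c$ lies outside $C_v$, any path in $\{u_\ep < 0\}$ from $c$ to $\mathcal E_{\text{ext}}(C_{(v,j)})$ must cross $\mathcal E_{\text{ext}}(C_v)$, on which Rule A gives $h = +1$; the local analysis then forbids the two negative cubes on opposite sides from sharing a nodal domain. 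If $c$ lies inside some $C_{(v,j)}$, $c$ belongs to a deeper shell $E(C_w) \setminus C_w$ with $w$ a strict descendant of $(v,j)$, and the separating surface $\mathcal E_{\text{ext}}(C_u)$ for an appropriate ancestor $u$ of $w$ again carries a sign of $h$ disconnecting negative cubes; the $\mathcal E_{\text{int}}$ edges are handled by case B of Section \ref{local}, splitting each such edge into a negative-gluing half on the outer side and a disconnected half on the inner side, while the $\mathcal E_{\text{join}}$ edges are handled by case C, merging only shells at the same depth.

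The main obstacle is the reverse inclusion, specifically the bookkeeping around the $\mathcal E_{\text{int}}$ edges along which $h$ changes sign: one must verify that the disconnected half of the case-B local picture always points inward toward the deeper shell, so that distinct levels of the nested tree structure never collapse into a single nodal domain. I expect this case-by-case analysis following the recursive tree structure, together with the detailed local descriptions of Section \ref{local}, to carry the main burden of the argument, while the forward inclusion follows directly from Rule A and the vertex analysis.
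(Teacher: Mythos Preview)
Your proposal is correct, and the forward inclusion is essentially the paper's argument. For the reverse inclusion, however, the paper takes a shorter and more direct route than your path-tracking contradiction scheme. Since $\Omega_v$ is a connected nodal domain, it can extend beyond $D_v$ only by gluing to a cube of the correct sign that is \emph{adjacent} to $D_v$; so it suffices to check those adjacent cubes. The paper does exactly this in two lines: (with $C_v\in\mathcal B^+$) any positive cube in $\R^3\setminus\bigcup_j E(C_{(v,j)})$ touching $D_v$ does so through an edge in $\mathcal E_{\text{ext}}(C_v)$, where $u_\ep\approx -\ep$ disconnects it; and any positive cube inside $\bigcup_j C_{(v,j)}$ touching $D_v$ does so through an edge in $\bigcup_i \mathcal E_{\text{ext}}(C_{(v,j,i)})$ (the grandchildren's external mesh), where again $u_\ep\approx -\ep$. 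No recursion over the full tree, and no path argument, is needed.

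Your anticipated ``main obstacle'' concerning the $\mathcal E_{\text{int}}$ edges is misplaced for this lemma. Those edges run between $\mathcal S_{\text{ext}}(C_v)$ and $\mathcal S_{\text{ext}}(C_{(v,j)})$, i.e.\ they lie inside the shell $D_v$ itself and do not sit on the interface between $D_v$ and its complement; hence they play no role in deciding which cubes belong to $\Omega_v$. The case-B analysis you invoke is genuinely needed later, in Claim~(iii) of the proof of Theorem~\ref{main theorem R b} (ruling out spurious bounded components of $\R^3\setminus\Omega_v^*$), but not here. So your approach works, but it imports machinery from a harder step into one that only requires Rule~A at the two bounding meshes.
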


\begin{proof}
First, we show that all the cubes in $\cup_{j=1}^N E(C_{(v,j)} ) \backslash C_{(v,j)}$ glue to form part of $\Omega_v$ after the perturbation is performed.  Assume, without loss of generality, that $C_v \in \mathcal B^+$. Then,  $C_{(v,j)} \in \mathcal B^-$ for every child $(v,j)$ of $v$.
All the cubes in $\cup_{j=1}^N E(C_{(v,j)} ) \backslash C_{(v,j)}$ have an edge in   $\mathcal E_{\text{ext}} (C_{(v,j)})$. Since such cubes are positive, and $u_\ep$ takes the value $\ep$ on $\mathcal E_{\text{ext}} (C_{(v,j)})$, it follows that  the cubes become part of the nodal domain that contains $\mathcal E_{\text{ext}} (C_{(v,j)})$. That is, all the cubes in $\cup_{j=1}^N E(C_{(v,j)} ) \backslash C_{(v,j)}$  become part of $\Omega_v$ after the perturbation is added to $u_0$.

Second, we show that no cubes, other than those in $ \cup_{j=1}^N E(C_{(v,j)} ) \backslash C_{(v,j)}$, will glue to  form part of $\Omega_v$. Indeed, any other positive cube in  $\R^3 \backslash \cup_{j=1}^N E(C_{(v,j)} ) $ that touches $\partial  ( \cup_{j=1}^N E(C_{(v,j)} ) )$ does so through an edge in $\mathcal E_{\text{ext}} (C_{v})$. Since the function $u_\ep$ takes the value $-\ep$ on $\mathcal E_{\text{ext}} (C_{v})$, those cubes will disconnect from $\cup_{j=1}^N E(C_{(v,j)} ) $ after we perturb. On the other hand, any positive cube in $ \cup_{j=1}^N C_{(v,j)} \in \mathcal B^-$ is touching $ \cup_{j=1}^N E(C_{(v,j)} )$ through edges in $ \cup_{i=1}^{N_j} \mathcal E_{\text{ext}} (C_{(v,j,i)})$ where $N_j$ is the number of children of $(v,j)$. Since $f$ takes the value $-\ep$ on $ \cup_{i=1}^{N_j} \mathcal E_{\text{ext}} (C_{(v,j,i)})$, the cubes in $ \cup_{j=1}^N C_{(v,j)} $ will also disconnect from $ \cup_{j=1}^N E(C_{(v,j)} ) \backslash C_{(v,j)}$.
\end{proof}

It is convenient to define the partial collections of nested domains. Given a tree $T$ , a perturbation  $u_\ep$, and $v \in T$, we define the collection $\Omega_v^*=\Omega_v^*(\ep)$ of all nodal domains that are descendants of $\Omega_v$ as follows. If $v$ is a leaf then $\Omega_v^*=\Omega_v$. If $v$ is not a leaf and has $N$ children, we set
 \[\Omega_v^*:= \overline{\Omega_v}\, \cup\, \bigcup_{j=1}^N \Omega_{(v,j)}^*. \]

\begin{remark} A direct consequence of Lemma \ref{limit} is the following.
Let $T$ be a tree and for each  $\ep>0$ let $u_\ep$ be the perturbation defined in \eqref{perturbation}.  For each  $v \in T$, 
\[\lim_{\ep \to 0} \Omega_v^*(\ep)= C_v.\]
\end{remark}

 \subsection{Proof of Theorem \ref{main theorem R b}}
We will use throughout this section that we know how the zero set behaves at a local scale (as described in Section \ref{local}).
 Let $T$ be a tree and for each  $\ep>0$ let $u_\ep$ be the perturbation defined in \eqref{perturbation}.  We shall prove that there is a subset of the nodal domains of $u_\ep$ that are nested as prescribed by $T$. Since for every $v \in T$ the set $\Omega_v$ is a nodal domain of $u_\ep$, the theorem would follow if we had that for all $v \in T$ \medskip
\begin{itemize} 
\item[(i)] $\Omega_{(v,j)}^* \subset \text{int} (\Omega_v^*)$  for every $(v,j)$ child of $v$.\\
\item[(ii)] $\Omega_{(v,j)}^* \cap \Omega_{(v,k)}^* =\emptyset$ for all $ j\neq k$.\\
\item[(iii)]  $\R^3 \backslash \Omega_v^*$ has no bounded component.\\
\end{itemize}

Statements (i), (ii) and (iii) imply that $\R^3 \backslash \Omega_v$ has $N+1$ components. One component is unbounded, and each of the other $N$ components is filled by  $\Omega_{(v,j)}^*$ for some $j$.
We prove statements (i), (ii) and (iii)  by induction. The statements are obvious for the leaves of the tree.\\

\begin{remark}
The proof of Claim (iii) actually shows that $\Omega_v$ can be retracted to the arc connected set $\bigcup_{j=1}^N \Omega_{(v,j)}^* \cup \bigcup_{j=1}^{N-1}\gamma_j$ where $\gamma_j \subset \Omega_v$ is the curve  introduced in Lemma \ref{nodal d} connecting  $ \mathcal E_{\text{ext}} (C_{(v,j)})$ with $\mathcal E_{\text{ext}} (C_{(v,j+1)})$  that passes through the midpoint of the edge joining  $E(C_{(v,j)})$ with $E(C_{(v,j+1)})$.
\end{remark}

 \emph{Proof of Claim (i)}. Since $\Omega_v^*= \overline{\Omega_v}\, \cup\, \bigcup_{j=1}^N \Omega_{(v,j)}^*$, we shall show that there exists an open neighborhood $\mathcal U_{(v,j)}$ of  $\Omega_{(v,j)}^*$ so that 
 $\mathcal U_{(v,j)} \subset \Omega_v^*.$
 
 Assume without loss of generality that $C_v \in \mathcal B^+$.  Then, for every child $(v,j)$, all the faces in $\partial C_{(v,j)}$ belong to cubes in $C_{(v,j)}$ that are negative. Also, all the other negative cubes in $\R^3 \backslash C_{(v,j)}$ that touch  $\partial C_{(v,j)}$ do so through an edge in $\mathcal E_{\text{ext}} (C_{(v,j)})$. Since the function $u_\ep$ takes the value $\ep$ on $\mathcal E_{\text{ext}} (C_{(v,j)})$, all the negative cubes in $ C_{(v,j)}$ are disconnected from those in $\R^3 \backslash C_{(v,j)}$ after the perturbation is performed. While all the negative cubes touching  $ C_{(v,j)}$  are disconnected, an open positive layer $\mathcal L_{(v,j)}$ that surrounds $\Omega^*_{(v,j)}$ is created. The layer $\mathcal L_{(v,j)}$ contains the grid $\mathcal E_{\text{ext}} (C_{(v,j)})$ and so it is  contained inside $\Omega_v$. The result follows from setting  $\mathcal U_{(v,j)}:=\mathcal L_{(v,j)} \cup \Omega^*_{(v,j)} $. \\
 
\emph{Proof of Claim (ii)}.  This is a consequence of how we proved the statement (i) since both  $\Omega_{(v,j)}^*$ and  $\Omega_{(v,k)}^*$ are surrounded by  a positive layer inside $\Omega_v^*$.  \ \\
 
 \emph{Proof of Claim (iii)}. Note that $\lim_{\ep \to 0} \cup_{j=1}^N  \Omega^*_{(v,j)}(\ep)=\cup_{j=1}^N C_{(v,j)}$ and that by the induction assuption $\R^3 \backslash \cup_{j=1}^N  \Omega^*_{(v,j)}$ has no bounded components . On the other hand, we also have that $\lim_{\ep \to 0} \Omega_v(\ep)= \cup_{j=1}^N E(C_{(v,j)})\backslash C_{(v,j)}.$ This shows that, in order to prove that $\R^3 \backslash \Omega_v^*$ has no bounded components, we should show that the cubes in $\cup_{j=1}^N E(C_{(v,j)})\backslash C_{(v,j)}$ glue to those in  $\cup_{j=1}^N C_{(v,j)}$ leaving no holes. 
    Note that all the cubes in $\cup_{j=1}^N E(C_{(v,j)})\backslash C_{(v,j)}$  are attached to  the mesh $\cup_{j=1}^N \mathcal E_{\text{ext}} (C_{(v,j)})$  through some faces or vertices.   
  
   Assume without loss of generality that   $C_v \in \mathcal B^+$. 
  For each $j \in \{1, \dots, N\}$ the layer $\mathcal L_{(v,j)}$ is contained in $\Omega_v$ and all the cubes in $E(C_v)\backslash C_v$ are glued to the layer thorugh an entire face or vertex. The topology of $\Omega_v$ will depend exclusively on how the cubes  in  $E(C_{(v,j)})\backslash C_{(v,j)}$  will join or disconnect each other along the edges that start at   $\mathcal E_{\text{ext}} (C_{(v,j)})$ and end at a distance $1$ from $\mathcal E_{\text{ext}} (C_{(v,j)})$.  
The function  $u_\ep$ takes the value $\ep$ on $\mathcal E_{\text{ext}} (C_{(v,j)})$. Also, note that if a pair of positive cubes in the unbounded component of  $\R^3 \backslash \mathcal L_{(v,j)}$ share an edge $e$ that starts at $ \mathcal E_{\text{ext}} (C_{(v,j)})$  and ends at a distance $1$ from it, then the end vertex belongs to $\mathcal E_{\text{ext}} (C_v)$, and the function $u_\ep$ takes the value $-\ep$ at this point. 
  \begin{figure}
\includegraphics[height=4cm]{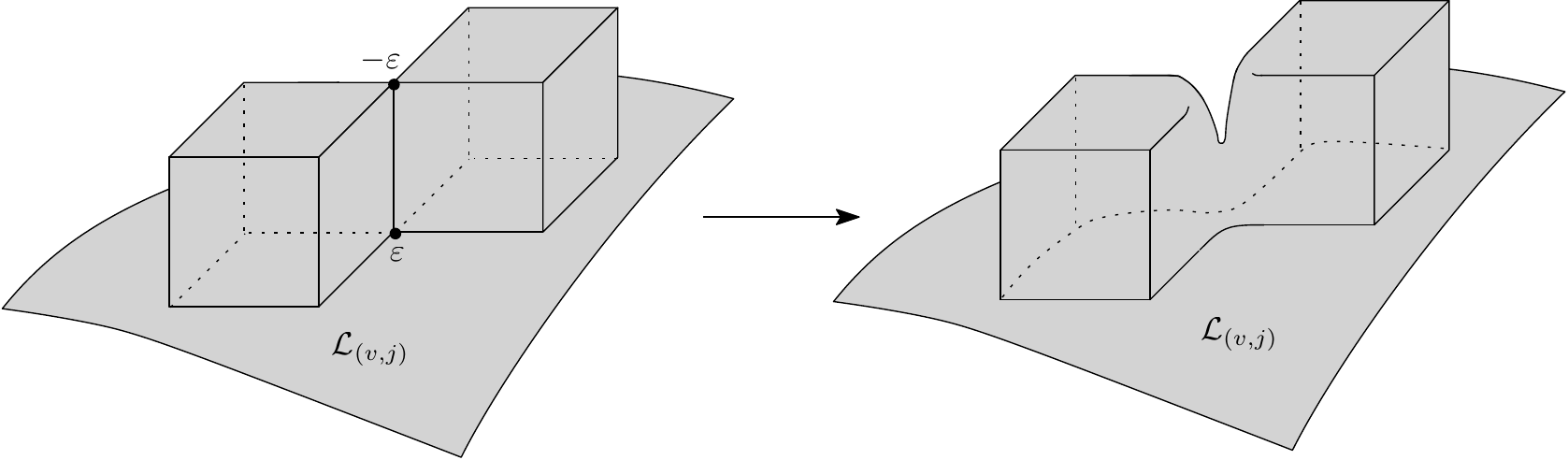}
 \end{figure}
Since the function $u_\ep$ has only one root on $e$, we have that no holes are added to $\Omega_v$ when applying the perturbation to those two cubes.    For cubes in the bounded component that share an edge one argues similarly and uses the value of $u_\ep$ on $ \cup_{i=1}^{N_j}\mathcal E_{\text{ext}} (C_{v,j,i})$ where $N_j$ is the number of children of $(v,j)$.

 To finish, we note that two consecutive  structures  $E(C_{(v,j)})$ and  $E(C_{(v,j+1)})$ are joined through an edge separating two  cubes as shown in Figure \ref{figure join}.  The function $u_\ep$ is negative (approximately equal to  $-\ep$) at the vertices of the edge, and  is positive at the middle point  (approximately equal to  $+\ep$). Since along the edge $u_\ep$ was prescribed to have only two roots, no holes are introduced when joining the structures.

\subsection{Higher dimensions}\label{higher dimensions}

The argument in higher dimensions is analogue to the one in dimension 3. We briefly discuss the modifications that need to be carried in this setting. 
Let \[u_0(x_1, \dots, x_n)=\sin(\pi x_1) \dots \sin(\pi x_n).\]
We will work with cubes in $\R^n$ that we identify with a point  $c \in  \mathbb Z^n$. That is, the cube corresponding to $c=(c_1, \dots, c_n)\in \Z^n$ is given by
$c=\{x \in \R^n: \;x_k \in [c_k, c_k+1]\}$. 
As before, we say that a cube is positive (resp. negative) if $u_0$ is positive (resp. negative) when restricted to it. 
 The collection of  faces of the cube $c$ is 
 $\cup_{1 \leq i \leq n} \cup_{ x_i \in \{c_i, c_i+1\}} \{x \in \R^n:  \;x_k \in [c_k, c_k+1]\;\; \forall k \neq i\}$. 
 The collection of edges is \[ \bigcup_{1 \leq i,j \leq n}\;\; \bigcup_{\substack{a_i\in \{c_i, c_i+1\}\\ a_j\in \{c_j, c_j+1\}}} H_c(a_i,a_j)\]
where each edge is described as the set
\[H_c(a_i,a_j)=\{x \in \R^n:\; x_i=a_i, \;\;\; x_j=a_j,\; \;\;x_k \in [c_k, c_k+1] \;\; \forall k \neq i,j\}.\]
We note that if two cubes of the same sign are adjacent, then they are connected through an edge or a subset of it.
In analogy with the $\R^3$ case, we define the collection $\mathcal B^+$ of all sets $\Omega$ that are built as a finite union of cubes with the following two properties:
\begin{itemize}
\item  $\R^n \backslash \Omega$ is connected. 
\item If $c$ is a cube in  $ \mathcal B^+$ with a face in $\partial B^+$, then $c$ must be a positive cube. 
\end{itemize}
We define  $\mathcal B^-$ in the same way only that the cubes with faces in $\partial \Omega$ should be negative cubes.

\ \\{\bf Engulf operation.} Let $C \in \mathcal B^+$. We define $E(C)$ to be the set obtained by adding to $C$ all the negative cubes that touch $C$, even if they share only one point with $C$. By construction   $E(C) \in \mathcal B^-$.  
If $C \in \mathcal B^-$, the set $E(C)$ is defined in the same form only that one adds positive cubes to $C$. In this case $E(C) \in \mathcal B^+$.

\ \\{\bf Join operation.} Given $C \in \mathcal B^+ \cup \mathcal B^-$ we distinguish two vertices using the lexicographic order.  For  $C \in \mathcal B^+ \cup \mathcal B^-$, let $\Gamma_C= C \cap \Z^n$ be the set of its vertices. We let $v_+(C)$ be the largest vertex in $\Gamma_C$ and $v_-(C)$ be the smallest vertex in $\Gamma_C$.
Given the vertex $v_+(C)$ we define the edge $e_+(C)$ to be the edge in $\partial C$ that contains the vertex $v_+(C)$ and is parallel to the hyperplane defined by the $x_1, \dots, x_{n-2}$ coordinates. The edge $e_-(C)$ is defined in the same way.

Given $C_1 \in \mathcal B^+$ and $C_2 \in \mathcal B^+$ we define $J(C_1, C_2) \in \mathcal B^+$  as follows. Let $\tilde C_2$ be the translated copy of $C_2$ for which $e_+(C_1)$ coincides with $e_-(\tilde C_2)$.
We ``join" $C_1$ and $C_2$  as
$J(C_1, C_2) = C_1 \cup \tilde C_2.$

In addition, for a single set $C$ we define $J(C)=C$, and if there are multiple sets $C_1, \dots, C_n$ we define $J(C_1, \dots, C_n)=J(C_1,  J(C_2, J(C_3, \dots J(C_{n-1},C_n)))).$

\ \\ {\bf Definition of the rough nested domains.} Given a tree $T$ we associate to each node  $v\in T$ a structure $C_{v} \subset \R^n$ defined as follows.  If the node   $v \in T$ is a leaf, then $C_{v}$ is a cube of the adequate sign. For the rest of the nodes we set
$ C_{ v}= J\left(E( C_{ (v,1)}), \dots, E( C_{ (v,N)}) \big.\right),$
where $N$ is the number of children of the node $v$.
We continue to identify the original structures $E( C_{ (v,j)})$ with the translated ones $\tilde E( C_{ (v,j)})$ that are used to build  $C_{v}$. After this identification, 
\[ C_{v}= \bigcup_{j=1}^N  E( C_{ (v,j)}).\]

\ \\ {\bf Building the perturbation.} 
Let  $v \in T$ be a node with $N$ children. We define the sets of edges $ \mathcal E_{\text{ext}}(C_v)$, $\mathcal E_{\text{int}}(C_v)$ and $\mathcal E_{\text{join}}(C_v)$ in exactly the same way as we did in $\R^3$ (see Section \ref{S: perturbation}).
  We  proceed to define a perturbation  $h:K \to \R$, where 
$$K=\bigcup_{v \in T} \mathcal E_{\text{ext}}(C_v)\cup \mathcal E_{\text{int}}(C_v)\cup   \mathcal E_{\text{join}}(C_v).$$ The function $h$ is defined by the rules $A$, $B$ and $C$ below. \\
Let $\chi:[0, \infty] \to [-1,1]$ be a smooth increasing function satisfying
\[\chi(0)=-1, \quad \chi(1/2)=0 \quad \text{and} \quad \chi(t) =1 \; \text{for}\; t \geq 1.\]
We also demand 
\begin{equation}\label{derivative}
\chi'(0)=0 \qquad \text{and}\qquad  \chi'(1/2) \geq 1.
\end{equation}\ \smallskip

\begin{itemize}
\item[A)] \emph{Perturbation on $\mathcal E_{\text{ext}} (C_v)$.} Let $v \in T$ and assume $C_v \in \mathcal B^-$. We define $h$  on every edge of  $\mathcal E_{\text{ext}} (C_v)$  to be $1$.   If  $C_v  \in \mathcal B^+$, we define $h$  on every edge of  $\mathcal E_{\text{ext}} (C_v)$  to be $-1$.     \\

\item[B)] \emph{Perturbation on $\mathcal E_{\text{int}} (C_v)$.} Let $H_c(a_i, a_j)$ be an edge that touches both $\mathcal E_{\text{ext}} (C_v)$ and $\mathcal E_{\text{ext}} (C_{(v,\ell)})$ for some of the child structures $C_{(v,\ell)}$ of $C_v$. Assume $C_v  \in \mathcal B^-$.  Then we know that we must have $h|_{\mathcal E_{\text{ext}} (C_v)}=1$ and $h|_{{\mathcal E_{\text{ext}} (C_{(v,\ell)})}}=-1$. Let $x_{i_1}, \dots, x_{i_k}$ be the set of directions in  $H_c(a_i, a_j)$ that connect $\mathcal E_{\text{ext}} (C_v)$ and $\mathcal E_{\text{ext}} (C_{(v,\ell)})$. 
 We let
\[h|_{H_c(a_i,a_j)}:H_c(a_i,a_j) \to [-1,1]\] be defined as
\[h(x_1, \dots, x_n)=\chi \left(\sqrt{ \sum_{m=1}^k (x_{i_m}-c_{i_m})^2}\right).\]
With this definition, since whenever $x \in \mathcal E_{\text{ext}} (C_{(v,\ell)})$ we have $x_{i_m}=c_{i_m}$ for all $m=1, \dots, k$, we get $h(x)=\chi(0)=-1$. Also, whenever $x \in \mathcal E_{\text{ext}} (C_{v})$ we have that there exists a coordinate $x_{i_m}$ for which $x_{i_m}=c_{i_m}+1$. Then, $\sum_m(x_{i_m}-c_{i_m})^2 \geq 1$ and so $h(x)=1$. Note that $h$ vanishes on the sphere $\mathcal S=\{x \in \R^n:\; \sum_{m=1}^k (x_{i_m}-c_{i_m})^2=1/4\}$ and that $\|\nabla h\| \geq 1$ on $\mathcal S$ because of \eqref{derivative}. If  $C_v  \in \mathcal B^+$, simply multiply $\chi$ by $-1$. \\

\item[C)] \emph{Perturbation on $\mathcal E_{\text{join}} (C_v)$.}  Let $v \in T$ and assume $C_v \in \mathcal B^-$. We set
\[h(x_1, \dots, x_n)=\chi \left( 2 \sqrt{\sum_{k=1}^{n-2} \big(x_{i_k} - \tfrac{2c_{i_k} +1}{2}\big)^2}\,\right),\]
where $i_k$ ranges over  the indices $\{1, \dots, n\}\backslash \{i,j\}$.
With this definition, whenever $x$ is at the center of the edge $H_c(a_i,a_j)$ we have $h(x)= \chi(0)=-1$. Also, if $x \in \partial H_c(a_i,a_j)$ we have $ \big(x_k - \tfrac{2c_k +1}{2}\big)^2 =1/4$ for some $k$, and so $h(x)=1$. Also note that $h$ vanishes on a sphere of radius $1/4$ centered at the midpoint of $H_c(a_i,a_j)$ and that the gradient of $h$ does not vanish on the sphere because of \eqref{derivative}.  If  $C_v  \in \mathcal B^+$, simply multiply $\chi$ by $-1$
\end{itemize}\ \\

\begin{remark}\label{extension2}
By construction the function $h$ is smooth in the interior of each edge. Furthermore, since according to \eqref{derivative} we have $\chi'(0)=0$ and $\chi'(1)=0 $, the  gradient of $h$ vanishes on the boundaries of the edges in $K$. Therefore,  the function $h$ can be extended to a function $h \in C^1(\Omega)$ where $\Omega \subset \R^n$ is an open neighborhood of $K$.
\end{remark}

 Given a tree $T$,  let $h \in C^1(\Omega)$ be defined following Rules A, B and C and Remark \ref{extension2}, where $\Omega \subset \R^n$ is an open neighborhood of $K$.  Since $K$ is compact and $\R^n \backslash K$ is connected,  Theorem \ref{T:perturbation} gives the existence of  $f: \R^n \to \R$  that satisfies 
 \[-\Delta f=f \quad \text{and} \quad  \sup_{ K}\{|f-h|+ \|\nabla f-\nabla h\|\} \leq \tfrac{1}{100}.\] 
For $\ep>0$ small set 
 \[u_\ep:= u_0 +\ep f.\]

The definitions in Rules A, B and C are the analogues to those in dimension $3$. For example, when working in dimension $3$ on the edge $e=\{(0,0,z):\; z\in [0,1]\}$, we could have set 
\[h(0,0,z)=\chi (z)  \;\; \qquad \text{if}\;\; e \in \mathcal E_{\text{int}} (C_v) \; \text{with} \;C_v \in  \mathcal B^- ,\]
and 
\[h(0,0,z)=\chi (2  |z-1/2|))  \;\; \qquad \text{if}\;\; e \in \mathcal E_{\text{join}} (C_v) \; \text{with}\; C_v \in  \mathcal B^-.\] \ \smallskip

Note that all the edges in $C_\emptyset$ are edges in $K$. Also, it is important to note that if two adjacent cubes have the same sign, then they share a subset of an edge in $K$.

If two adjacent cubes are connected through a subset of  $\mathcal E_{\text{ext}}(C_v)$, then the cubes will be either glued or separated along that subset. This is because the function $f$ is built to be strictly positive (approx. $\ep$) or strictly negative (approx. $-\ep$) along the entire edge.

If two adjacent cubes share an edge through which two structures are being joined, then they will be glued to each other near the midpoint of the edge. This is because $f$ is built so that it has the same sign as the cubes in an open neighborhood of the midpoint of the joining edge.

If two adjacent cubes in $C_v$ of the same sign share a subset of an edge in $H_c(a_i,a_j)\in \mathcal E_{\text{int}}(C_v)$, then with the same notation as in Rule B, there exists a subset of directions $\{x_{i_{m_1}}, \dots x_{i_{m_s}}\}\subset  \{x_{i_1}, \dots, x_{i_k}\}$ so that the set $\mathcal R=\{x \in H_c(a_i,a_j):\;\;x_{i_{m_t}} \in [c_{i_{m_t}}, c_{i_{m_t}}+1]  \; \forall t=1, \dots, s \}$ is shared by the cubes. By construction, the cubes will be glued through the portion $\mathcal R_1$ of $\mathcal R$ that joins $(c_{i_{m_1}}, \dots, c_{i_{m_s}})$ with the point $(z_1, \dots, z_s)$ near the midpoint $\big(c_{i_{m_1}}+\tfrac{1}{2}, \dots,c_{i_{m_s}}+ \tfrac{1}{2}\big)$, while being disconnected through the portion $\mathcal R_2$ of $\mathcal R$ that joins  the point $(z_1, \dots, z_s)$ with $(c_{i_{m_1}}+1, \dots, c_{i_{m_s}}+1)$. This is because $f$ is prescribed to have the same sign as  the cubes along $\mathcal R_1$, while taking the opposite sign of the cubes along $\mathcal R_2$. 

Let $C_v \in B^-$, with $C_v=\cup_{j=1}^N E(C_{(v, \ell)})$. Running a similar argument to the one given in $\R^3$ one obtains that all the cubes in $\mathcal E_{\text{ext}}(C_{(v, \ell)})\backslash C_{(v,\ell)}$  will glue to form a negative nodal domain $\Omega_v$ of $u_\ep$. We sketch the argument in what follows. All the negative cubes  in $\R^n \backslash C_v$ that touch $C_v$ do so through an edge in $\mathcal E_{\text{ext}} (C_v)$ since they will be at distance $1$ from the children structures $\{C_{(v, \ell)}\}_\ell$. Since the perturbation $f$ takes a strictly positive value (approx.  $+\ep$) along any edge in $\mathcal E_{\text{ext}} (C_v)$, the negative cubes  in $\R^n \backslash C_v$ will be separated from those in in $ C_v$.   Simultaneously, for each $\ell$, all the cubes in $E(C_{(v, \ell)}) \backslash C_{(v, \ell)}$ are glued to each other since they are negative cubes that touch $\mathcal E_{\text{ext}}(C_{(v, \ell)})$ and $\mathcal E_{\text{ext}}(C_{(v, \ell)})$ is a connected set on which the perturbation $f$  takes a strictly negative value (approx.  $-\ep$). This gives that $\mathcal E_{\text{ext}}(C_{(v, \ell)})$ belongs to a negative nodal domain of $u_\ep$, and that the negative cubes in $E(C_{(v, \ell)}) \backslash C_{(v, \ell)}$ are glued to the nodal domain after the perturbation is performed. Furthermore, two consecutive structures $E(C_{(v, \ell)}) $ and $E(C_{(v, \ell+1)}) $ are joined through an edge in  $\mathcal E_{\text{int}} (C_v)$.  This edge, which joins a negative cube in $E(C_{(v, \ell)}) $ and a negative cube in $E(C_{(v, \ell+1)}) $ has its boundary inside $\mathcal E_{\text{ext}}(C_{(v, \ell)})$. Since $f$ is strictly positive (approx. $+\ep$) on $\mathcal E_{\text{ext}}(C_{(v, \ell)})$, we know that the parts of the two cubes that are close to the boundary will be disconnected. However, since the perturbation was built so that $f$ is strictly negative  (approx. $-\ep$) at the midpoint of the edge, both negative cubes are glued to each other. In fact, one can build a curve $\gamma_\ell$ contained inside the nodal domain that joins $\mathcal E_{\text{ext}} (C_{(v, \ell)})$ with $\mathcal E_{\text{ext}} (C_{(v, \ell+1)})$. It then follows that all the cubes in  $\cup_{j=1}^N E(C_{(v, \ell)})\backslash C_{(v, \ell)}$ are glued to each other after the perturbation is performed and they will form the nodal domain $\Omega_v$ of $u_\ep$ containing  $\cup_{\ell=1}^n \mathcal E_{\text{ext}}(C_{(v, \ell)})$. 
One can carry the same stability arguments we presented in Section \ref{local} to obtain that at a local level there are no unexpected new nodal domains. For this to hold, as in the $\R^3$ case, the argument hinges on the fact that in the places where both $u_0$ and $f$ vanish, the gradient of $f$ is not zero (as explained at the end of each Rule).  Finally, Rule B is there to ensure that the topology of each nodal domain is controlled in the sense that when the cubes in $\mathcal E_{\text{ext}}(C_{(v, \ell)})\backslash C_{(v,\ell)}$  glue to each other  they do so without  creating unexpected handles. Indeed, the cubes in  $\mathcal E_{\text{ext}}(C_{(v, \ell)})\backslash C_{(v,\ell)}$ can be retracted to the set $\bigcup_{\ell=1}^N \Omega_{(v,\ell)}^* \cup \bigcup_{\ell=1}^{N-1}\gamma_\ell$ where $\Omega_{(v,\ell)}^*:= \overline{\Omega_{v, \ell}}\, \cup\, \bigcup_{\ell=1}^{N_\ell} \Omega_{(v,\ell, j)}^*$ and $\{(v, \ell, j):\, j=1,\dots, N_\ell\}$ are the children of $(v, \ell)$.

The argument we just sketched also shows that  the nodal domains $\Omega_v$ with $v \in T$ are nested as prescribed by the tree $T$. Indeed, claims (i), (ii) and (iii) in the proof of Theorem \ref{main theorem R b} are proved in $\R^n$ in exactly the same way we carried the argument in $\R^3$.


\end{document}